\let\polishlcross=\l
\def\l{\ifmmode\ell\else\polishlcross\fi}
\let\emptyset=\varnothing
\let\theta=\vartheta
\let\rho=\varrho
\let\phi=\varphi
\def\NN{\mathbb N}
\DeclareFontFamily{U}  {MnSymbolC}{}
\DeclareSymbolFont{MnSyC}         {U}  {MnSymbolC}{m}{n}
\DeclareFontShape{U}{MnSymbolC}{m}{n}{
    <-6>  MnSymbolC5
   <6-7>  MnSymbolC6
   <7-8>  MnSymbolC7
   <8-9>  MnSymbolC8
   <9-10> MnSymbolC9
  <10-12> MnSymbolC10
  <12->   MnSymbolC12}{}
\DeclareMathSymbol{\powerset}{\mathord}{MnSyC}{180}
\theoremstyle{plain}
\newtheorem{thm}{Theorem}[section]
\newtheorem{fact}[thm]{Fact}
\newtheorem{cor}[thm]{Corollary}
\newtheorem{lemma}[thm]{Lemma}
\newtheorem{exmp}[thm]{Example}
\newtheorem{quest}[thm]{Question}
\newtheorem{conj}[thm]{Conjecture}
\theoremstyle{definition}
\newtheorem{dfn}[thm]{Definition}
\DeclareMathOperator{\Cl}{Cl}
\DeclareMathOperator{\Fr}{Fr}
\DeclareMathOperator{\Ins}{On}
\DeclareMathOperator{\Reach}{R}
\title{Circuits through prescribed edges}
\author{Paul Knappe}
\author{Max Pitz}
\address{University of Hamburg, Department of Mathematics, Bundesstra{\ss}e 55 (Geomatikum), 20146 Hamburg, Germany}
\email{paul.knappe@studium.uni-hamburg.de, max.pitz@uni-hamburg.de}
\keywords{circuits; closed trails; Eulerian subgraph.}
\subjclass[2010]{05C38, 05C40, 05C45} 
\begin{document}

\begin{abstract}
We prove that a connected graph contains a circuit---a closed walk that repeats no edges---through any $k$ prescribed edges if and only if it contains no odd cut of size at most~$k$.
\end{abstract}

\maketitle

\section{Introduction}\label{sec:introduction}
 
Finding a cycle\footnote{This paper follows the notation in Bollob\'as' \emph{Graph theory}, \cite{bollobasintroductorycourse1979}.} containing certain prescribed vertices or edges of a graph is a classical problem in graph theory. When specifying vertices, already Dirac \cite{dirac1960abstrakten}*{Satz~9} observed that, in a $k$-connected graph, any $k$ vertices lie on a common cycle, and that this is not necessarily true for $k+1$ distinct vertices. 
Dirac's results marked the starting point for a number of results giving conditions under which a set of vertices lies on a common cycle, and we refer the reader to Gould's survey \cite{gould2009look} for a detailed overview of results in this direction.

When trying to find a cycle containing some specified edges, research has been driven by a number of conjectures due to Lov\'asz~\cite{Lovasz} (1973) and Woodall~\cite{MR0439686} (1977). The strongest of these is the following:

\begin{conj}[Lov\'asz-Woodall Conjecture]\label{conj:lovaszwoodall}
	Let $S$ be a set of $k$ independent edges in a $k$-connected graph $G$. If $k$ is even or $G-S$ is connected, then there is a cycle in $G$ containing $S$.
\end{conj}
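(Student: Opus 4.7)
The natural plan is to use the paper's main circuit theorem (announced in the abstract) as a stepping stone and then upgrade the closed trail it produces to a simple cycle. First we verify the hypothesis of the circuit theorem for our $G$ and $S$. A $k$-connected graph is $k$-edge-connected, so every edge cut has at least $k$ edges, and hence the only cuts of size at most $k$ are cuts of size exactly $k$. When $k$ is even every such cut has even size, so there is no odd cut of size $\le k$ and the circuit theorem immediately yields a closed trail $C$ containing $S$. When $k$ is odd we must rule out odd $k$-cuts separately, and this is where the hypothesis that $G-S$ be connected should enter: a $k$-edge-cut of $G$ that consisted entirely of $S$-edges would disconnect $G-S$, so some edge of any odd $k$-cut must lie outside $S$, and this together with the matching structure of $S$ should let us conclude.

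Assuming $C$ is produced, the main task is to convert $C$ into a simple cycle. The key structural fact is that $S$ is a matching of size $k$, so its $2k$ endpoints are pairwise distinct, and at each such endpoint $C$ uses the incident $S$-edge together with exactly one further edge (when that endpoint has $C$-degree $2$). My plan is to iteratively uncross $C$ at any vertex $v$ that it visits more than once: such a vertex decomposes $C$ locally into several closed sub-trails meeting at $v$, and I would invoke the $k$-vertex-connectivity of $G$, via Menger's theorem, to find a $v$-avoiding path joining two of the sub-trails in a way that merges them into a single closed trail still containing $S$ but visiting $v$ one fewer times. Iterating this should terminate with a cycle through $S$.

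The hard part, and what I expect to be the main obstacle, is making each uncrossing compatible with the $S$-edges and with the edges already used by the walk. A naive shortcut can reuse an edge of $C$, bypass an $S$-edge, or destroy the connectivity that the next iteration will need. An alternative I would also try is induction on $k$: delete an edge $uv\in S$ together with a carefully chosen subgraph, verify that the residual data still satisfies the inductive hypothesis (which is where the parity split on $k$ re-enters), find a cycle through the remaining $k-1$ matching edges, and then reinsert $uv$. Both approaches share the same fundamental difficulty, namely preserving $S$ inside a \emph{simple} cycle under local modifications; this is precisely the gap between the paper's circuit theorem and the Lov\'asz-Woodall statement, and it is the reason that moving from the closed-trail setting to the cycle setting is genuinely delicate.
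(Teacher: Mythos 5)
This statement is stated in the paper as a \emph{conjecture}: the paper gives no proof of it, cites it only as motivation, and its own main theorem deliberately weakens ``cycle'' to ``circuit'' precisely because the cycle version is a long-standing open problem (Lov\'asz 1973, Woodall 1977; the partial results of H\"aggkvist--Thomassen and Kawarabayashi quoted as Theorems~\ref{conj:woodall} and~\ref{thm:kawarabayashi} are the state of the art and already require substantial machinery). So there is no proof in the paper to compare yours against, and a two-page reduction to the circuit theorem cannot be correct.

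Two concrete gaps. First, your verification of the circuit theorem's hypothesis fails for odd $k$. Since $G$ is $k$-connected it is $k$-edge-connected, so the only cuts of size at most $k$ have size exactly $k$; for even $k$ these are all even, fine. But for odd $k$ the hypothesis ``$G-S$ is connected'' only excludes cuts contained entirely in $S$. A vertex $v$ of degree exactly $k$ gives an odd cut $\partial_G\{v\}$ of size $k$, and since $S$ is a matching at most one edge of this cut lies in $S$, so $G-S$ remains connected; hence the circuit theorem simply does not apply, and you do not even obtain the closed trail in the odd case. Second, and more fundamentally, the circuit-to-cycle upgrade is the entire content of the conjecture, and your uncrossing step does not work as described: if $C$ visits $v$ twice, splicing in a $v$-avoiding path $P$ between two of the closed subtrails at $v$ raises the degrees of the endpoints of $P$ to odd values, so the result is not a closed trail; restoring even degrees forces you to delete edges, and nothing prevents those deletions from hitting $S$ or from recreating repeated vertices elsewhere. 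There is no monotone quantity here guaranteeing termination, and no mechanism by which \emph{vertex} connectivity (as opposed to edge connectivity) is genuinely exploited --- yet vertex connectivity is essential for the cycle statement, since edge connectivity alone places no bound on how often a circuit must revisit a vertex. The same objection applies to your inductive alternative: after deleting $uv\in S$ and ``a carefully chosen subgraph'' you have no control over the connectivity of what remains. If you want a correct and provable statement in the spirit of this paper, prove Theorem~\ref{thm:characterisation of Graphs containing a cycle through prescribed edges} instead; the cycle version should be treated as the open conjecture it is.
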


Building on earlier work by Woodall, in particular on a technique of Woodall from \cite{MR0439686} called the \emph{Hopping Lemma}, H\"aggkvist and Thomassen~\cite{MR676859} (1982) and Kawarabayashi~\cite{MR1877899}*{Theorem 2} (2002) established the following variants of the Lov\'asz-Woodall Conjecture. First, in the case of H\"aggkvist and Thomassen, by setting out from the stronger assumption of $(k+1)$-connectedness,  and second, in the case of Kawarabayashi, by obtaining a weaker conclusion, namely, two cycles instead of one.
 
 \begin{thm}[H\"aggkvist and Thomassen]
 \label{conj:woodall}
	For any set $S$ of $k$ independent edges in a $(k+1)$-connected graph, there is a cycle in $G$ containing $S$.
\end{thm}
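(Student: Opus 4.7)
I would proceed by induction on $k$, with the base case $k = 1$ being the classical observation that every edge of a $2$-connected graph lies on a cycle, which is immediate from Menger's theorem applied to the endpoints of the edge.

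For the inductive step, let $G$ be $(k+1)$-connected with independent edges $S = \{e_1, \dots, e_k\}$, writing $e_i = u_i v_i$. A useful reformulation is to subdivide each $e_i$ by inserting a new degree-two vertex $w_i$, obtaining a graph $G'$. Because $\deg_{G'}(w_i) = 2$, any cycle of $G'$ through $w_i$ must traverse both $u_i w_i$ and $w_i v_i$; thus cycles in $G$ through $S$ correspond bijectively to cycles in $G'$ through the set $W = \{w_1, \dots, w_k\}$. This recasts the problem as a ``$k$ vertices on a common cycle'' question in the spirit of Dirac. One cannot apply Dirac's theorem directly, since subdivision reduces connectivity to $2$, but the additional structure---each $w_i$ has the two fixed neighbours $u_i, v_i$---should compensate.

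The concrete strategy I would pursue: among all cycles of $G$, select one $C$ containing as many edges of $S$ as possible, and write $S_C = S \cap E(C)$. If $S_C = S$ we are done. Otherwise fix $e = xy \in S \setminus S_C$ and aim to construct a new cycle $C^*$ with $S_C \cup \{e\} \subseteq E(C^*)$, contradicting maximality. For this, the $(k+1)$-connectivity provides, via Menger's theorem, a large system of internally disjoint paths from $\{x, y\}$ to $V(C)$ avoiding the endpoints of $S_C$. The technical engine to splice $e$ into $C$ along such paths, without losing any edge of $S_C$, should be a variant of Woodall's Hopping Lemma, which permits one to ``hop'' along $C$ and reroute locally around obstructions.

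The main obstacle will be in the hopping phase: ensuring that when $e$ is inserted, no edge of $S_C$ is forced off $C$. Each hopping step risks displacing an $S$-edge, and the bookkeeping---over which $S$-edges are endangered and how many internal vertices of $C$ are consumed by the rerouting---is where the sharp bound of $(k+1)$-connectivity (rather than $k$-connectivity) ought to be exhausted. I would expect the argument to split into cases according to the cyclic positions of the $S_C$-edges around $C$ relative to the paths reaching $x$ and $y$, with the careful combinatorial analysis of these cases forming the crux of the proof.
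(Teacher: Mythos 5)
This statement is not proved in the paper at all: it is Theorem~\ref{conj:woodall}, a result of H\"aggkvist and Thomassen from 1982 that the authors merely cite as background for their own, different problem (circuits rather than cycles). So there is no in-paper proof to compare against, and your proposal must be judged on its own as an attempted proof of a known but genuinely hard theorem.

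Judged that way, it is a research plan rather than a proof, and the gap sits exactly where you yourself locate it. The preliminary moves are fine: the base case $k=1$ is correct, and the subdivision trick converting ``$k$ independent edges on a cycle'' into ``$k$ degree-two vertices on a cycle'' is a standard and legitimate reformulation. But the entire mathematical content of the theorem is concentrated in the step you defer: given a cycle $C$ through $S_C \subsetneq S$ and an edge $e = xy \in S \setminus S_C$, produce a cycle through $S_C \cup \{e\}$. You say this ``should be a variant of Woodall's Hopping Lemma'' and that the ``careful combinatorial analysis of these cases form[s] the crux of the proof'' --- which is precisely an admission that the crux is absent. Nothing in the proposal explains how the Menger fan from $\{x,y\}$ to $V(C)$ is chosen relative to the $S_C$-edges, what the hopping iteration actually modifies, what invariant guarantees that no edge of $S_C$ is ever lost, why the process terminates, or where the count $k+1$ (as opposed to $k$) is actually spent. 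These are not routine details: the H\"aggkvist--Thomassen proof is a substantial argument, and the $k$-connected version (the Lov\'asz--Woodall Conjecture~\ref{conj:lovaszwoodall}) resisted a full proof for decades, which indicates that no soft argument of the shape you sketch can close the gap. As written, the proposal identifies the right toolbox but proves nothing beyond the case $k=1$.
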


\begin{thm}[Kawarabayashi]\label{thm:kawarabayashi}
	Let $S$ be a set of $k$ independent edges in a $k$-connected graph $G$. If $k$ is even or $G-S$ is connected, then $S$ is contained in one or a union of two vertex disjoint cycles of $G$.
\end{thm}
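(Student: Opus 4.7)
The plan is to follow Woodall's classical Hopping Lemma strategy from \cite{MR0439686}, with the relaxation from one cycle to two giving extra flexibility over the Lov\'asz--Woodall setup. It is convenient to subdivide each edge $e_i \in S$ by inserting a new vertex $u_i$, reducing the problem to finding at most two vertex-disjoint cycles in the resulting graph $G'$ that cover the set $U = \{u_1, \ldots, u_k\}$ of inserted vertices, since a cycle in $G'$ through $u_i$ is forced to use both halves of $e_i$ and so corresponds to a cycle in $G$ through $e_i$.

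I would then argue by extremality: take a system $\cF$ of at most two vertex-disjoint cycles in $G'$ maximising $|V(\cF) \cap U|$, and subject to this, minimising $|V(\cF)|$. If $\cF$ covers $U$ we are done, so suppose some $u \in U$ is missed. Appealing to the $k$-connectivity of $G$, Menger's theorem supplies $k$ internally disjoint paths from $u$ to $V(\cF)$ in the original graph, which lift back to paths in the subdivided graph avoiding the other vertices of $U$. The Hopping Lemma then allows one to rewire a segment of a cycle in $\cF$ through $u$, so as to increase $|V(\cF) \cap U|$. A careful choice of the paths, guided by the second minimality condition on $\cF$, is needed to ensure that no already-covered vertex of $U$ is dropped in the exchange.

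The main obstacle, and the reason for the parity/connectivity hypothesis, lies in controlling the cases where the Hopping Lemma fails to produce an improvement. Such a failure typically forces the existence of a small vertex separator in $G$, contradicting $k$-connectivity, except in an exceptional configuration in which $S$ essentially forms an odd edge-cut of $G$ whose removal disconnects the graph. This exceptional case is exactly what the hypothesis ``$k$ even or $G-S$ connected'' rules out: any cycle meets each edge-cut in an even number of edges, so an odd cut of size $k$ can never be entirely covered by a single cycle, and an analogous parity phenomenon affects two-cycle systems. Handling this case analysis carefully, while keeping the cycle count bounded by two rather than by a larger constant, is the principal technical hurdle; the relaxation from one cycle to two is precisely what provides enough room for the final hopping step to succeed in the residual configurations that obstruct the full Lov\'asz--Woodall conjecture.
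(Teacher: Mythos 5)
This theorem is quoted by the paper from Kawarabayashi's work \cite{MR1877899} and is not proved here, so there is no in-paper argument to compare yours against; it has to stand on its own. As written, it does not: it is a strategy outline in which every genuinely hard step is deferred rather than carried out. The sentence ``The Hopping Lemma then allows one to rewire a segment of a cycle in $\cF$ through $u$'' asserts exactly what needs to be proved, and the subsequent admission that ``a careful choice of the paths \dots is needed'' and that ``handling this case analysis carefully \dots is the principal technical hurdle'' concedes that the core of the argument --- the analysis of when the hopping step fails and why failure forces either a small separator or the exceptional odd-cut configuration --- is missing. That analysis is the entire content of Kawarabayashi's paper; without it you have a plausible plan, not a proof.

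There are also concrete technical slips in the parts you do spell out. After subdividing each $e_i$ by a new vertex $u_i$, the vertex $u_i$ has degree $2$ in $G'$, so $G'$ is not $k$-connected and Menger cannot give you $k$ internally disjoint paths emanating from $u=u_i$; you must instead work from the two endpoints of $e_i$ in $G$, and you then need the resulting fan to avoid the other $k-1$ edges of $S$ and their subdivision vertices, which is an additional nontrivial constraint (a $k$-fan meeting $k-1$ forbidden edges leaves you very little room, and managing this is precisely where the independence of $S$ and the parity hypothesis are consumed). Finally, the role of the hypothesis ``$k$ even or $G-S$ connected'' is only gestured at: the clean statement is that a disjoint union of cycles is an even subgraph and therefore meets every edge cut in an even number of edges, so if $G-S$ is disconnected and the resulting cut inside $S$ is odd, no such system can cover $S$; but that only establishes necessity of the hypothesis, whereas your proof must exhibit where the hypothesis is \emph{used} in the construction, and the proposal never does.
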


In the present paper, we are interested in a further variant of the problem, where instead of a cycle we aim to find a circuit---a closed walk that repeats no edges (but may repeat vertices)---containing a set of prescribed edges. Clearly, for this variant, it is no longer necessary to assume our edges to be independent. If one aims for results similar in spirit to the cycle case above, it seems natural to consider edge-connectivity instead of vertex connectivity. But whereas in the above cases, vertex connectivity is a far-from necessary condition, the corresponding version for circuits admits a complete characterisation in terms of edge cuts, which is the main result of our paper.

\begin{thm}\label{thm:characterisation of Graphs containing a cycle through prescribed edges}
A connected graph contains a circuit through any $k$ prescribed edges if and only if it contains no odd cut of size at most~$k$.
\end{thm}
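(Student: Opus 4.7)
For necessity, a circuit, being a closed walk, crosses every edge cut an even number of times. If $D=\delta(X)$ is an odd cut of size at most $k$, then setting $S:=D$ gives a set of $\le k$ prescribed edges through which no circuit can pass: such a circuit would use each edge of $D$ exactly once, yielding the odd number $|D|$ of crossings of $D$.

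For sufficiency, assume $G$ is connected with no odd cut of size at most $k$, and let $S\subseteq E(G)$ with $|S|\le k$. The first step is to produce an even subgraph $F\supseteq S$. Let $T\subseteq V(G)$ be the set of vertices of odd $S$-degree, so $|T|$ is even. I claim $G-S$ contains a $T$-join $J$; by the standard criterion this reduces to checking that every component of $G-S$ meets $T$ in an even number of vertices. If $C$ is such a component with $V(C)=X\subsetneq V(G)$, then $\delta_G(X)\subseteq S$, so $|\delta_G(X)|\le|S|\le k$ and, by hypothesis, is even; a double count then yields $|T\cap X|\equiv|\delta_G(X)|\equiv 0\pmod 2$. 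With $J$ in hand, $F:=S\cup J$ is an even subgraph containing $S$.

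The second step is to arrange that $F$ be connected on its support, for then $F$ carries an Eulerian circuit through $S$. I would choose $F\supseteq S$ even with the minimum number of connected components. An exchange argument shows every component of $F$ must meet $S$ (else it could be deleted to reduce the count). Let the components be $F_1,\dots,F_r$ with vertex sets $V_1,\dots,V_r$; if $r=1$ we are done. Otherwise $r\ge 2$, and $\delta_G(V_i)\cap F=\emptyset$, hence $\delta_G(V_i)\cap S=\emptyset$. The aim is to produce an even subgraph $E'$ of $G$ with $E'\cap S=\emptyset$ whose vertex set meets two different $V_i$, for then $F\triangle E'\supseteq S$ is even with strictly fewer components, contradicting minimality.

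To build $E'$, I would contract each $V_i$ inside $G-F$ to a single vertex $v_i$ and delete loops, obtaining a graph $G^*$. Any bridge of $G^*$ would lift back to a bridge of $G$, ruled out by the no-odd-cut-of-size-$1$ hypothesis; thus $G^*$ is $2$-edge-connected. Menger's theorem supplies two edge-disjoint $v_1$-$v_2$ paths in $G^*$, whose union lifts to an edge set $E'_0\subseteq E(G)\setminus F$ meeting both $V_1$ and $V_2$ and having even aggregate degree at each $V_i$, but possibly odd degree at certain vertices $W_i\subseteq V_i$. The main obstacle that remains---and the technical heart of the proof---is to patch these parity defects by pairing up each $W_i$ via repair paths, so as to extend $E'_0$ to a genuine even subgraph $E'$ that still meets two different $V_i$ and contains no edges of $S$. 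I expect the correct repair to use paths either within $J\cap E(V_i)$ or within $E(V_i)\setminus F$, with their existence obtained by another appeal to the odd-cut hypothesis, now applied to cuts arising inside each $V_i$ during the repair.
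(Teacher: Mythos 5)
Your necessity argument is fine, and your first step of sufficiency is a correct proof of Jaeger's theorem (Theorem~\ref{thm:jaeger}), which the paper also uses for necessity. But the entire difficulty of the theorem is concentrated in your second step, and there the argument has a genuine gap --- in fact two. First, even granting an even subgraph $E'$ with $E'\cap S=\emptyset$ whose vertex set meets $V_1$ and $V_2$, it does \emph{not} follow that $F\triangle E'$ has strictly fewer components: $E'$ may itself be disconnected (e.g.\ one piece inside $V_1$ and another inside $V_2$), and the edges of $E'\cap E(F)$ that get \emph{deleted} in the symmetric difference can split a component $F_i$ into several pieces. So the claimed contradiction with minimality is not established. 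Second, the parity repair you defer is not a routine patch. The two Menger paths may pass through further contracted vertices $v_j$, creating defects in every $V_j$ they touch, and the repair paths pairing up $W_i$ must avoid $S$ and must either avoid $E(F)$ entirely (such paths need not exist: take $G$ to be two triangles joined by two independent edges, $S$ one edge from each triangle, $F$ the two triangles --- then $E(V_i)\setminus F=\emptyset$) or else use edges of $F$, which reopens the first problem. You gesture at ``another appeal to the odd-cut hypothesis'' inside each $V_i$, but no such cut is identified, and it is not clear what it would be.

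This is precisely the trap the paper warns about: Lai's published proof of this statement founders on the assumption that Jaeger's even subgraph can be taken connected, and Example~\ref{ex:jaeger} shows how badly disconnectedness can be forced. The paper therefore does not try to merge components of an even subgraph at all. Instead it inducts on $k$: given a circuit $H$ through $e_1,\dots,e_k$, it reduces to the case where $e_{k+1}$ is a bridge of $G-E(H)$ (Theorem~\ref{thm:wlgobridge}) and then runs a trail version of Woodall's Hopping Lemma, growing ``reach'' sets $A$ and $B$ from the two ends of $e_{k+1}$ and extracting the odd cut $\partial_H A\cup\{e_{k+1}\}$ of size at most $k+1$ when the construction stalls. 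If you want to pursue your route, the burden is to prove that, under the odd-cut hypothesis, a component-minimal even subgraph containing $S$ is connected; that claim is essentially equivalent to the theorem itself and is not addressed by the sketch as it stands.
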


\begin{cor}\label{cor:2k-1-property equivalent to 2k-property} If for some $k \in \NN$ a connected graph contains a circuit through any $2k-1$ prescribed edges, then it also contains a circuit through any $2k$ prescribed edges.
\end{cor}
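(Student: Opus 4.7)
The plan is to deduce Corollary~\ref{cor:2k-1-property equivalent to 2k-property} as an almost immediate consequence of Theorem~\ref{thm:characterisation of Graphs containing a cycle through prescribed edges}. By that theorem, for any $j \in \NN$ a connected graph $G$ contains a circuit through every prescribed set of $j$ edges if and only if $G$ has no odd cut of size at most~$j$. Applying this equivalence with $j = 2k-1$ and with $j = 2k$, I would reduce the corollary to the purely numerical statement that, in a connected graph $G$, the absence of odd cuts of size $\leq 2k-1$ is equivalent to the absence of odd cuts of size $\leq 2k$.

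This numerical equivalence is a parity triviality. An odd cut is, by definition, a cut of odd cardinality, and every odd integer lying in $[1,2k]$ already lies in $[1,2k-1]$. Hence the family of odd cuts of size at most $2k$ coincides with the family of odd cuts of size at most $2k-1$, so the two hypotheses of Theorem~\ref{thm:characterisation of Graphs containing a cycle through prescribed edges} at parameters $2k-1$ and $2k$ are literally the same, and the corollary follows.

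There is no substantive obstacle here: the entire content of the statement is absorbed into Theorem~\ref{thm:characterisation of Graphs containing a cycle through prescribed edges}. A direct combinatorial argument---for instance, taking a circuit through some $2k-1$ of the $2k$ prescribed edges and trying to modify it to include the remaining edge---does not look at all easy without the cut characterisation, which is precisely the reason the corollary is most naturally stated as a byproduct of the main theorem rather than proved by itself.
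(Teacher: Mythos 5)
Your proof is correct and is exactly the argument the paper intends: the corollary is stated as an immediate consequence of Theorem~\ref{thm:characterisation of Graphs containing a cycle through prescribed edges}, using the observation that a cut of size exactly $2k$ is even, so the odd cuts of size at most $2k$ are precisely those of size at most $2k-1$. Nothing further is needed.
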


While all the graphs treated in this paper are simple, one can easily derive the same characterisation for multigraphs, since subdividing every edge of a multigraph once does not give rise to new odd cuts.

To see that the condition in Theorem~\ref{thm:characterisation of Graphs containing a cycle through prescribed edges} is necessary, recall that the graph given by the vertices and edges of a circuit is Eulerian, i.e.\ even and connected, and so a necessary requirement for finding a circuit through a set of edges is that it can be extended to an even subgraph. The latter has been characterised by Jaeger~\cite{MR519177} in 1979.

\begin{thm}[Jaeger]\label{thm:jaeger}
	A set of edges in a graph $G$ is contained in an even subgraph of $G$ if and only if it contains no odd cut of $G$.
\end{thm}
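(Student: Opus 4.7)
For necessity, I would give a short parity calculation. If $S$ contains an odd cut $\partial_G U \subseteq S$ and $H \supseteq S$ is any even subgraph, then $|H \cap \partial_G U| = |\partial_G U|$ is odd; but the evenness of $H$ forces $\sum_{v \in U} \deg_H(v) = 2\,|E(H[U])| + |H \cap \partial_G U|$ to be even, so $|H \cap \partial_G U|$ would have to be even --- a contradiction.

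For sufficiency, suppose $S \subseteq E(G)$ contains no odd cut. The plan is to construct the desired even supergraph in the form $F := S \cup J$ with $J \subseteq E(G) \setminus S$ chosen to repair the parities. Set $T := \{v \in V(G) : \deg_S(v) \text{ is odd}\}$, which has even cardinality. Then $F$ is even if and only if $J$ is a $T$-join in the graph $G - S$, so the task reduces to producing such a $T$-join.

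Next, I would invoke the classical existence criterion for $T$-joins: a $T$-join exists in a graph $H$ precisely when $|T \cap V(K)|$ is even for every connected component $K$ of $H$. To verify this for $G - S$, fix a component $W$ of $G - S$. Every $G$-edge from $W$ to $V(G) \setminus W$ must lie in $S$ (otherwise it would enlarge $W$ inside $G - S$), hence $\partial_G W \subseteq S$, and by hypothesis this cut has even size. The double-count $\sum_{v \in W} \deg_S(v) = 2\,|\{e \in S : e \subseteq W\}| + |\partial_G W|$ is therefore even, so $|T \cap W|$---having the same parity as this sum---is also even, as required.

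The only substantive black box is the $T$-join existence criterion itself, which I would either cite from the matching/$T$-join literature or establish separately by induction on $|T|$ (pair up two $T$-vertices lying in a common component via a path and recurse on their symmetric difference). Once that is granted, the odd-cut hypothesis translates transparently into the component-wise parity condition on $G - S$, and the rest of the argument is assembly; the main conceptual step is realizing that the correct reformulation is a $T$-join problem on $G - S$ with $T$ the odd-degree vertices of~$S$.
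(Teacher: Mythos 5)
Your argument is correct. Note, however, that the paper does not prove this statement at all: it is quoted as a known theorem of Jaeger (1979) and used as a black box, so there is no in-paper proof to compare against. Your proposal supplies a valid self-contained proof. The necessity direction is the standard parity count: if $\partial_G U \subseteq S \subseteq E(H)$ with $H$ even, then $\sum_{v\in U}\deg_H(v) = 2\,|E(H[U])| + |\partial_G U|$ forces $|\partial_G U|$ to be even. For sufficiency, your reformulation as a $T$-join problem in $G-S$, with $T$ the odd-degree vertices of the subgraph spanned by $S$, is exactly right, and the verification of the component-wise parity condition is sound: for a component $W$ of $G-S$ one indeed has $\partial_G W \subseteq S$, hence $|\partial_G W|$ is even by hypothesis, and the double count over $W$ gives $|T\cap W|$ even. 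The $T$-join existence criterion (a $T$-join exists iff every component meets $T$ in an even number of vertices) is standard and your sketch of its proof--pair up $T$-vertices within a component by paths and take the symmetric difference of their edge sets--is the usual one. So the proof is complete; the only stylistic remark is that one could avoid the $T$-join language entirely by performing that path-pairing argument directly on $G-S$, but that is the same proof.
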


However, while Jaeger's theorem immediately shows the necessity of our characterising condition in Theorem~\ref{thm:characterisation of Graphs containing a cycle through prescribed edges}, it does not yield its sufficiency, as Jaeger's even subgraph is not necessarily connected (even if $G$ is). This issue was also overlooked by Lai~\cite{MR1812342}.
See Section~\ref{sec:Concluding remarks and an open question} for further discussion when Jaeger's condition does give rise to a circuit.

\begin{exmp}[Counterexample to \cite{MR1812342}*{Theorem 1.1 \& 4.1}]
\label{ex:jaeger}
Let $k \geq 3$, let $G$ be the ladder with $k+1$ rungs, and $S$ be a set of rungs of $G$ of size $3 \leq |S| \leq k$. Then $S$ extends to an even subgraph of $G$, but every such even subgraph has at least $\lceil \frac{|S|}{2}\rceil \geq 2$ components.
\end{exmp}

\begin{figure}[ht]
	\centering
\begin{tikzpicture}[scale=1]

\definecolor{colore}{RGB}{255,0,0}
\definecolor{colorC}{RGB}{1,41,150}

\pgfmathsetmacro\r{2}
\pgfmathsetmacro\da{4}
\pgfmathsetmacro\db{\da+2}
\pgfmathsetmacro\dc{\da+1}
\pgfmathsetmacro\captdiste{5}
\pgfmathsetmacro\captdistC{2.5}

\foreach \s in {0,...,3}
{
\node [draw, circle,scale=.3, fill] (nodeu\s) at (\r*\s, 0.5*\r) {};
\node [draw, circle,scale=.3, fill] (nodel\s) at (\r*\s,-0.5*\r) {};
}

\node [draw, circle,scale=.3, fill] (nodeu4) at (\r*\db, 0.5*\r) {};
\node [draw, circle,scale=.3, fill] (nodel4) at (\r*\db,-0.5*\r) {};

\draw (nodeu1) -- (nodel1); : 

\draw (nodeu0) -- (nodel0);

\foreach \s in {1,...,3}
{
\pgfmathsetmacro\t{\s-1}
\draw (nodeu\t) -- (nodeu\s);
\draw (nodel\t) -- (nodel\s);
\draw [colore, thick] (nodeu\s) -- node [left= \captdiste pt]{$e_{\s}$} (nodel\s);
}

\draw (nodeu3) -- (\r*\da-\r/3, 0.5*\r);
\draw (nodel3) -- (\r*\da-\r/3,-0.5*\r);

\draw (nodeu4) -- (\r*\dc+\r/3, 0.5*\r);
\draw (nodel4) -- (\r*\dc+\r/3,-0.5*\r);

\draw [colore, thick] (nodeu4) -- node [left=\captdiste pt]{$e_{k}$} (nodel4);

\foreach \s in {0,...,2}
{
\node[draw, circle,scale=.1, fill] (dotu\s) at (\r*\da+\r*0.5*\s,0.5*\r) {};
\node[draw, circle,scale=.15, fill, colore] (dotm\s) at (\r*\da+\r*0.5*\s,0) {};
\node[draw, circle,scale=.1, fill] (dotm\s) at (\r*\da+\r*0.5*\s,-0.5*\r) {};

\draw [colorC, thick] (\r*1.25, 0.75*\r) to [out=300, in=420] node [left=\captdistC pt]{$C$} (\r*1.25, -0.75*\r);
\draw [colorC, thick] (\r*2.25, 0.75*\r) to [out=300, in=420] node [left=\captdistC pt]{$C'$} (\r*2.25, -0.75*\r);
}
\end{tikzpicture}

\caption{A ladder with specified rungs $S=\{e_1,\dots,e_k\}$.}
\label{fig:counterexamplehong}
\end{figure}
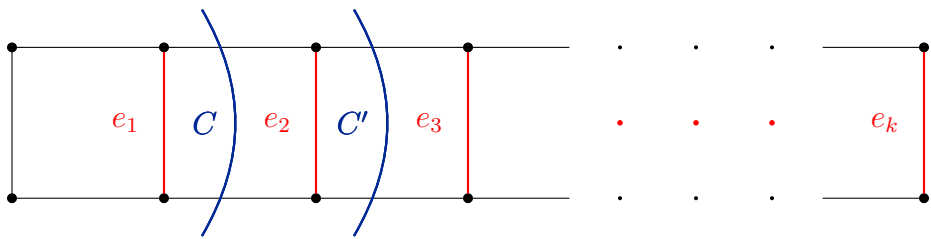

\begin{proof}
    Since $G-S$ is connected, the set $S$ does not contain any cut of $G$ (regardless of its parity), and so $S$ extends to an even subgraph by Theorem~\ref{thm:jaeger}.
    
    Now let $e_1,e_2,e_3 \in S$ be three edges ordered from left to right (cf. Figure~\ref{fig:counterexamplehong}), and suppose for a contradiction there is an even, connected subgraph $H$ of $G$ containing $e_1,e_2,e_3$. Let $C$ and $C'$ be the edge cuts consisting of the two incident edges to the left and to the right of $e_2$ respectively (cf. Figure \ref{fig:counterexamplehong}). Since $H$ is connected and contains $e_1$ and $e_3$, $H$ meets both cuts $C$ and $C'$. Since $H$ is even, it meets every cut of $G$ in an even number of edges, and so $C \cup C' \subset E(H)$. But then both end vertices of $e_2$ have degree three in $H$, a contradiction to $H$ being even.
    
    In particular, if $H$ is an even subgraph containing $S$, then every component of $H$ contains at most two rungs from $S$, and so $H$ has at least $\lceil \frac{|S|}{2}\rceil$ components.
  \end{proof}
  
So instead of referring to Jaeger's theorem for proving the sufficiency of the characterising condition in Theorem~\ref{thm:characterisation of Graphs containing a cycle through prescribed edges}, we once more build on the technique of Woodall's hopping lemma.

Finally, let us mention the survey by Catlin~\cite{catlin1992supereulerian} for related research on the existence of spanning circuits in a graph. Lai~\cite{MR1812342}*{Theorem 3.3} established the following sufficient condition for a graph to contain a spanning circuit through any $k$ prescribed edges:

\begin{thm}[Lai]
\label{thm:lai}
For $k \in \NN$ let $f(k)$ be the smallest even integer $\geq \max(k,4)$. If $G$ is $f(k)$-edge-connected, then $G$ contains a spanning circuit through any $k$ prescribed edges.
\end{thm}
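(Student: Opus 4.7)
The plan is to combine our main result (Theorem~\ref{thm:characterisation of Graphs containing a cycle through prescribed edges}) with classical machinery for spanning closed trails in highly edge-connected graphs. Since the hypothesis here is considerably stronger than the edge-cut condition of Theorem~\ref{thm:characterisation of Graphs containing a cycle through prescribed edges}, the proof should reduce to an upgrade step from an arbitrary circuit through $S$ to a spanning one.

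First, I would verify that $f(k)$-edge-connectivity precludes odd cuts of size at most~$k$. Every cut of $G$ has size at least $f(k)$, so for odd $k$ one has $f(k)\geq k+1>k$; for even $k\geq 4$ one has $f(k)=k$, so any cut of size at most $k$ has size exactly $k$, which is even; and for $k\leq 3$ the bound $f(k)=4$ already rules out cuts of size at most $k$. In every case $G$ contains no odd cut of size~$\leq k$, and so Theorem~\ref{thm:characterisation of Graphs containing a cycle through prescribed edges} furnishes a circuit $C$ through the prescribed edge set $S$.

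Promoting $C$ to a \emph{spanning} circuit is the remaining task, and where I expect the real difficulty to lie. My preferred route is Catlin's collapsibility theory for supereulerian graphs: every $4$-edge-connected graph is collapsible, and in a collapsible graph any edge set that extends to a connected even subgraph also extends to a spanning connected even subgraph, i.e., to a spanning closed trail. Since $f(k)\geq 4$, $G$ is collapsible, and since $C$ already provides a connected even subgraph containing $S$, the collapsibility step delivers the desired spanning circuit through $S$. An alternative, more hands-on, route is to iterate over $V(G)\setminus V(C)$: for each missing vertex $v$, use the $f(k)$-edge-connectivity to find two edge-disjoint paths from $v$ into $V(C)$ avoiding $E(C)$, and splice the resulting detour into the current trail. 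The main obstacle in this direct approach is maintaining the even-parity condition after each splice while keeping $S$ inside the trail, which is precisely the bookkeeping that collapsibility encapsulates.
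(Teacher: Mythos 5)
First, a point of reference: Theorem~\ref{thm:lai} is quoted from Lai~\cite{MR1812342} and is not proved in this paper, so your argument has to stand entirely on its own. Your opening step is correct: $f(k)$-edge-connectivity does exclude odd cuts of size at most $k$, so Theorem~\ref{thm:characterisation of Graphs containing a cycle through prescribed edges} yields a circuit through $S$, and the assertion that every $4$-edge-connected graph is collapsible is also correct (Nash-Williams/Tutte gives two edge-disjoint spanning trees, and Catlin showed such graphs are collapsible). The gap is the transfer lemma you then invoke: it is \emph{false} that in a collapsible graph every edge set contained in a connected even subgraph is also contained in a spanning connected even subgraph. Counterexample: take a vertex $u$ with exactly four neighbours $w_1,\dots,w_4$, join $w_1,\dots,w_4$ into a $K_4$, and attach each $w_i$ by one further edge $w_ix_i$ to four distinct vertices $x_1,\dots,x_4$ of a large complete graph. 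The resulting graph $G$ is $4$-edge-connected, hence collapsible. Let $S$ consist of the six $K_4$-edges together with $w_1x_1,\dots,w_4x_4$, so $d_S(w_i)=4$ while $d_G(w_i)=5$. Then $S\cup\{x_1x_2,\,x_3x_4\}$ is a connected even subgraph containing $S$, so a circuit through $S$ exists; but in \emph{any} even subgraph $W\supseteq S$ each $w_i$ has degree exactly $4$ (it cannot have the odd degree $5$), forcing $uw_i\notin E(W)$ for every $i$, so $u$ is isolated in $W$ and no spanning connected even subgraph contains $S$. This does not contradict Lai's theorem, since here $|S|=10$ while $G$ is only $4$-edge-connected; but it refutes your lemma, which asks only for collapsibility of $G$.

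Structurally, what goes wrong is that collapsibility of $G$ gives, for $X$ the set of vertices of odd $S$-degree, a spanning connected subgraph $R$ with odd-degree set $X$; but to obtain a spanning even subgraph \emph{containing} $S$ you need such an $R$ inside $G-S$, i.e.\ you need collapsibility of $G-S$ rather than of $G$, and this is precisely where the full hypothesis $f(k)\geq\max(k,4)$ must enter (and even then $G-S$ can be disconnected when $S$ is a cut, which is why the usual route subdivides the edges of $S$ and applies Catlin's reduction method to the subdivided graph). In particular, your preliminary step of producing a possibly non-spanning circuit through $S$ via Theorem~\ref{thm:characterisation of Graphs containing a cycle through prescribed edges} contributes nothing to the spanning upgrade, as the example shows. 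Your alternative splicing route also fails earlier than you indicate: $G-E(C)$ need not even be connected, so the two edge-disjoint paths from a missing vertex into $V(C)$ avoiding $E(C)$ may not exist, quite apart from the parity bookkeeping you flag.
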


A related variant is to find spanning trails (not necessarily closed) containing a given set of edges, see e.g.\  \cite{xu2014spanning} and the references therein. 

\section{Preliminaries}\label{sec:Preliminaries}
All graphs in this paper are finite and simple. We let $\NN = \{0,1,2,\ldots \}$ and use $[n]=\{1,2,\ldots,n \}$ and $[0,n] = \{0,1,\ldots,n \}$.
For our use of the terms \emph{cycle}, \emph{walk}, \emph{trail} and \emph{circuit}, we follow \cite{bollobasintroductorycourse1979}. Let us clarify the use of technical terms now. 

\begin{dfn} \label{dfn:Basics2}
	 Let $G=(V,E)$ be a graph. For a set of vertices $A\subseteq V$, we write 
	 \begin{itemize}
	     \item $\partial_G A := \{uv \in E \colon u \in A, v \notin A \}$ for the \emph{edge boundary} of $A$ in $G$.
	 \end{itemize}
	
	For $F\subseteq E$, we call

\begin{itemize}
	\item $F$ a \emph{cut} of $G$, if there is an $A\subseteq V$ such that $\partial_G A =F$, and
	\item a cut $F$ \emph{odd}, if $|F|$ is odd. Otherwise, we call $F$ \emph{even}.
\end{itemize}
\end{dfn}
Recall that all cuts of some graph are even if and only if all its vertices have even degree. 

\begin{dfn}\label{dfn:Basics1} Let $G=(V,E)$ be a graph, and let $T=v_0\dots v_r$ a walk in $G$. 

\begin{itemize}
	\item $T$ is a \emph{trail} in $G$, if all of its edges are distinct. Further, $v_0$ is the \emph{start vertex} and $v_r$ is the \emph{end vertex} of $T$, and all other vertices are called \emph{inner vertices} of $T$.
	\item $T$ is \emph{closed}, if its start and end vertex agree. A closed trail is also called \emph{circuit}.
    \item $V(T)$ and $E(T)$ denote the vertices and edges of the underlying subgraph of $T$.
\end{itemize}

\end{dfn}

\begin{dfn}\label{dfn:properties of trails}
Let $G=(V,E)$ be a graph. For $x,y\in V$, $X,Y\subseteq V$ and trails $P=p_0\dots p_r$ and $Q=q_0\dots q_w$ in $G$, we define
\begin{itemize}
    \item $PQ$ or $p_0Pp_rQq_w$ is the \emph{concatenated trail} $p_0\ldots p_r q_1\ldots q_w$ (only when $P$ and $Q$ are edge-disjoint and $p_r=q_0$), 
	\item $P$ is an \emph{$X{-}Y$ trail}, if $p_0 \in X$, $p_r\in Y$ and no inner vertex is in $X$ or $Y$. For singletons write $x{-}y$ trail instead of $\{x\}{-}\{y\}$ trail,
	\item $P$ is a \emph{subtrail} of $Q$ with \emph{witnessing interval} $I_P= \{ t_P,\dots t_P+r \}\subseteq[0,w]$, if $p_h=q_{t_P+h}$ for every $h\in[0,r]$ or $p_h=q_{t_P+r-h}$ for every $h\in[0,r]$, and
	\item $\bar{Q}=q_w\dots q_0$ is the \emph{reversed trail} of $Q$.
\end{itemize}
\end{dfn}

\begin{fact}\label{fact:item:unique witnessing interval}
If $P$ is a subtrail of $Q$ and $P$ uses at least one edge, then the witnessing interval $I_P$ of $P$ in $Q$ is unique.
\end{fact}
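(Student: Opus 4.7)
The plan is to pin down the witnessing interval from the edge set of $P$, exploiting that $Q$ is itself a trail. Since $P$ uses at least one edge, we have $r \geq 1$. First I would unpack the definition of a subtrail: for any witnessing interval $I_P = \{t, t+1, \ldots, t+r\}$, the $h$-th edge $p_hp_{h+1}$ of $P$ equals $q_{t+h}q_{t+h+1}$ in the forward case, and $q_{t+r-h-1}q_{t+r-h}$ in the reversed case. Either way, as $h$ runs over $[0, r-1]$, one recovers exactly
\[
E(P) \;=\; \{\, q_iq_{i+1} : t \le i \le t+r-1 \,\}.
\]

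Next, given two witnessing intervals $I_1 = \{t, \ldots, t+r\}$ and $I_2 = \{s, \ldots, s+r\}$, the displayed identity applied to both yields
\[
\{\, q_iq_{i+1} : i \in [t, t+r-1] \,\} \;=\; E(P) \;=\; \{\, q_iq_{i+1} : i \in [s, s+r-1] \,\}.
\]
Because $Q$ itself is a trail, the edges $q_0q_1, \ldots, q_{w-1}q_w$ are pairwise distinct, so this equality of edge sets forces the two index sets to coincide. Hence $t = s$ and $I_1 = I_2$.

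The only step that requires any care is checking that the two orientation alternatives in the definition of a subtrail yield the same set of $Q$-edges (just traversed in opposite order); beyond this small calculation the argument is purely set-theoretic and relies on nothing more than the trail property of $Q$. The assumption $r \ge 1$ is essential here: a one-vertex subtrail $p_0$ sitting on a vertex that appears several times along $Q$ would admit multiple witnessing intervals, so the edge used by $P$ is precisely what anchors the interval uniquely.
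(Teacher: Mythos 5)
Your proof is correct and follows essentially the same route as the paper: both arguments use that the edges of the trail $Q$ are pairwise distinct (together with simplicity of the graph) to locate each edge of $P$ at a unique position in $Q$, which pins down the witnessing interval. The paper phrases this via a unique index $j(i)$ for each edge of $P$ and writes $I_P$ as the union of the resulting index pairs, while you equate the two candidate index sets directly; the substance is identical.
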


\begin{proof}
Let $P=p_0\dots p_r$ and $Q= q_0 \dots q_w$ with $r\geq 1$. Note that while for a single vertex $p_i$ there might be several $q_j$ with $p_i = q_j$, for every edge $p_{i-1}p_{i}$ there is a unique $j=j(i) \in [w]$ with  $p_{i-1}p_{i} = h_{j-1}h_{j}$ (since our graphs are simple). From this, it follows that $I_P=\bigcup_{i \in [r]} \{j(i)-1,j(i)\}$, and so the witnessing interval $I_P$ of $P$ in $Q$ is unique.
\end{proof}

\begin{dfn}\label{dfn:intervals}
 Let $(X,<_X)$ be a finite linear order. For $a \leq_X b\in X$, we define
 \begin{itemize}
     \item $[a,b]_{<_X}:=\big\{\l\in X\colon a\leq_X \l\leq_X b  \}$ as the \emph{closed interval} from $a$ to $b$.
 \end{itemize} 
 Further, for a subset $Y \subseteq X$, we write 
 \begin{itemize}
     \item $\max_{<_X} Y$ for the \emph{greatest element} of $Y$ with respect to $<_X$, and
     \item $\min_{<_X} Y$ for the \emph{smallest element} of $Y$ with respect to $<_X$. 
      \end{itemize} 
\end{dfn}

\section{A reduction to the bridge case}
\label{sec:characterisation theorem}

The proof of our characterisation theorem of graphs containing a circuit through any $k$ prescribed edges will proceed via induction on $k$. For the induction step, suppose we have $k+1$ edges $e_1,\dots,\, e_{k+1}$ of $G$ and may assume inductively that any $k$ edges lie on a common circuit in $G$. Let $H$ be such a circuit through $e_1,\dots,e_k$ in $G$. Our task is then to also incorporate the last edge $e_{k+1}$ into a circuit.

As our first result, we will show that it suffices to consider the case where $e_{k+1}$ is a bridge in $G-E(H)$. 
More precisely, we claim that it suffices to prove the following theorem:

\begin{thm}
\label{thm:wlgobridge}
Let $G$ be a graph containing no odd cut of size at most $k+1$, let $\{e_1,\ldots,e_{k+1}\}$ be a collection of $k+1$ edges in $G$, and $H$ be a circuit in $G$ through $e_1,\ldots,e_k$ such that $e_{k+1}$ is a bridge in $G-E(H)$.

Then there exists a circuit $H'$ in $G$ through $e_1,\ldots,e_{k+1}$. Moreover, if an end vertex of $e_{k+1}$ is not in $V(H)$, then we may assume that $H'$ passes it exactly once.
\end{thm}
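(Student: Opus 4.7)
My plan is to modify $H$ into a circuit through $e_{k+1}$ by local surgery, adapting Woodall's hopping lemma technique. Set $e_{k+1}=uv$, and let $A\ni u$ and $B\ni v$ be the two components of $G-E(H)-e_{k+1}$ produced when the bridge $e_{k+1}$ is removed.

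The starting point is a cut estimate. Since $A$ is a component of $G-E(H)-e_{k+1}$, every edge leaving $A$ other than $e_{k+1}$ must lie in $E(H)$, so
\[
\delta_G(A)=\{e_{k+1}\}\cup \bigl(E(H)\cap \delta_G(A)\bigr).
\]
As $H$ is Eulerian, $|E(H)\cap \delta_G(A)|$ is even, and hence $|\delta_G(A)|$ is odd. The odd-cut hypothesis then forces $|\delta_G(A)|\ge k+2$, so $H$ crosses $\delta_G(A)$ at least $k+1$ times; symmetrically for $B$. In particular $H$ decomposes into at least $\lceil(k+1)/2\rceil$ maximal $A$-subtrails and the same number of maximal $B$-subtrails, and at most $k$ of the edges on $H$ in or around these subtrails are prescribed, so there is room to manoeuvre.

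The plan is then to build a detour using $e_{k+1}$ and splice it into~$H$. The detour consists of a trail inside~$A$ from a suitable vertex of $V(H)\cap A$ to~$u$, followed by $e_{k+1}$, followed by a trail inside~$B$ from $v$ to a suitable vertex of $V(H)\cap B$; such trails exist since $A$ and $B$ are connected in $G-E(H)-e_{k+1}$. Splicing this detour into $H$ amounts to choosing a place to cut $H$ and reattach the detour in such a way that the result is a single closed trail through $E(H)\cup\{e_{k+1}\}$, and in particular through $\{e_1,\dots,e_{k+1}\}$.

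The main obstacle will be to perform the splicing so that (i) no prescribed edge $e_i$ is lost, (ii) no edge is repeated (so the result is a trail), and (iii) the result is a \emph{single} closed trail rather than a disjoint union of closed fragments. This is exactly the combinatorial bookkeeping at the heart of Woodall's hopping lemma: the surplus crossings of~$H$ across $\delta_G(A)$ and $\delta_G(B)$, beyond the bare minimum forced by the cut estimate, provide enough slack to rectify parity and connectivity defects through an intricate finite case analysis on how maximal $A$- and $B$-subtrails of $H$ are arranged relative to the prescribed edges. The ``moreover'' clause falls out of the construction: if $u\notin V(H)$, the new trail enters $A$ via the single detour trail ending at~$u$ and leaves only via $e_{k+1}$, so $H'$ passes through $u$ exactly once, and symmetrically for~$v$.
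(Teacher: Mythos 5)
Your opening cut computation is sound and is indeed the same parity observation that drives the paper's Lemma~\ref{lemma:existence of a starting segment}: $\partial_G A$ consists of $e_{k+1}$ together with an even number of edges of the even subgraph $H$, so it is an odd cut and the hypothesis forces $|\partial_G A|\geq k+2$. From that point on, however, what you have written is a strategy statement rather than a proof, and the strategy as literally described cannot work. If you attach a single detour $R$ running from some $x\in V(H)\cap A$ through $e_{k+1}$ to some $y\in V(H)\cap B$, then $E(H)\cup E(R)$ has exactly two vertices of odd degree, namely $x$ and $y$, so it is never the edge set of a circuit; to restore parity you must also \emph{delete} an $x$--$y$ subtrail of $H$, and the entire difficulty of the theorem is to choose the attachment points and the deleted portion so that none of $e_1,\dots,e_k$ is lost and what remains is a single closed trail. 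Your proposal compresses exactly this step into ``an intricate finite case analysis'' justified by ``surplus crossings''; that assertion is the theorem, not an argument for it.

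Two further points show why the gap is not merely expository. First, one detour need not suffice: the circuit produced in the paper may leave and re-enter $V(H)$ many times (this is what condition \ref{itemC2} of coherence controls), and the correct attachment points are found not inside $V(H)\cap A$ for your fixed component $A$, but by iterating a reach-and-close operation ($A_1=\Reach(\{a\})$, $A_{i+1}=\Reach(\Cl(A_i))$), whose closure step adjoins vertices of $H$ lying \emph{between} previously reached ones---vertices that can lie entirely outside your component. Second, the odd-cut hypothesis must be applied not to your component but to the stabilised reach set (the set $D$ in the proof of Lemma~\ref{lemma:existence of a starting segment}) in order to force $A$ and $B$ to meet a common segment of $H$; your estimate $|\partial_G A|\geq k+2$ counts crossings of one cut but says nothing about how the reachable vertices distribute over the $k$ segments between prescribed edges, which is the quantity that actually matters. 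The splicing itself is then carried out by a descent on coherent trails (Theorem~\ref{lemma:existence of desired circuit}), none of which is present, even in outline, in your write-up. The ``moreover'' clause you state would indeed fall out of a correct construction, but only once one exists.
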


We defer the proof of Theorem~\ref{thm:wlgobridge} until the next section, and first show how to complete the proof of the Characterisation Theorem~\ref{thm:characterisation of Graphs containing a cycle through prescribed edges} given Theorem~\ref{thm:wlgobridge}.

\begin{proof}[Proof of Theorem~\ref{thm:characterisation of Graphs containing a cycle through prescribed edges} given Theorem~\ref{thm:wlgobridge}]

As announced, the proof of the sufficiency of the characterisation in Theorem~\ref{thm:characterisation of Graphs containing a cycle through prescribed edges} will go via induction on $k$. The base case is easy: A connected graph without odd cuts of size at most $k=1$ is evidently the same as a bridgeless connected graph. But any edge in such a graph lies on a circuit. 

Now assume inductively that Theorem~\ref{thm:characterisation of Graphs containing a cycle through prescribed edges} holds for some integer $k \in \NN$. To prove Theorem~\ref{thm:characterisation of Graphs containing a cycle through prescribed edges} in the case $k+1$, let $G$ be a graph containing no odd cut of size at most $k+1$, and $S=\{e_1,\ldots,e_{k+1}\}$ a collection of $k+1$ edges in $G$. By induction, we may find a circuit $H$ in $G$ through $e_1,\ldots,e_k$. If $e_{k+1} \in E(H)$, we are done.

So assume that $e_{k+1} \notin E(H)$. If $e_{k+1}$ is a bridge of $G-E(H)$, then we are done by Theorem~\ref{thm:wlgobridge} (the moreover-part is not needed in this case). Otherwise, $e_{k+1}$ is not a bridge in $G-E(H)$, and we may pick $D$ as the maximal $2$-edge-connected subgraph of $G-E(H)$ containing~$e_{k+1}$.

Note that $D$ and $H$ are edge-disjoint, but might share vertices. If they do, choose $v\in V(D)\cap V(H)$ arbitrarily. To see that there is a circuit $H^*$ in $D$ containing $v$ and  $e_{k+1}$, construct an auxiliary graph $D'$ from $D$ by subdividing $e_{k+1}$ by a new vertex $w$. Since $D$ is 2-edge-connected, so is $D'$. By  Menger's theorem, there are two edge-disjoint $w{-}v$ paths in $D'$ translating to the desired circuit $H^*$ in $D$. Since $H$ and $H^*$ are edge-disjoint and intersect in $v$, it is clear that $E(H) \cup E(H^*)$ is the edge set of a circuit covering $S$.

Thus, we may assume that $V(D)\cap V(H)=\emptyset$. Let $F:=\partial_G(V(D))\subseteq E\setminus E(H)$ and observe that every edge in $F$ is a bridge in $G-E(H)$. Since $G$ is connected, $F$ is non-empty, and we choose $e_F\in F$ arbitrarily. Write $e_F=uw$ with $u\in V(D)$. Next, we contract $D$ in $G$. Let $G'$ be the resulting graph and $v_D\in V(G')$ be the vertex corresponding to the contracted $D$. 

Observe that $H$ is still a circuit through $e_1,\dots,e_k$ in $G'$, that $v_D$ is not contained in $V(H)$ and that $G'$ is simple. Furthermore, every cut of $G'$ is also a cut in $G$ (after uncontracting $v_D$), and so $G'$ contains no odd cut of size at most $k+1$. Hence, we may apply Theorem~\ref{thm:wlgobridge} to $G'$, $H$ and $e_F$ to find a circuit $H'\subseteq G'$ through $e_1,\dots,e_{k}$ and $e_F$, such that $H'$ passes $v_D$ exactly once (by the moreover-part). Let $e=u'w'$ with $u'\in V(D)$ be the edge in $F$ corresponding to the other edge in $H'$ incident with $v_D$. The circuit $H'$ in $G'$ corresponds to an $u'{-}u$ trail $H^*$ in $G-E(D)$. By subdividing $e_{k+1}$ in $D$ once and using Menger's theorem in the resulting $2$-edge-connected graph $D'$, we find an $u {-} u'$ trail $Q$ in $D$ trough $e_{k+1}$. Since $Q$ and $H^*$ are edge-disjoint, it follows that $uQu'H^*u$ is the desired circuit in $G$ through $e_1,\ldots,e_{k+1}$.
\end{proof}

\section{Proving the bridge case}

In this section, we prove Theorem~\ref{thm:wlgobridge}, completing the proof of the characterisation stated in Theorem~\ref{thm:characterisation of Graphs containing a cycle through prescribed edges}. As indicated in the introduction, our proof of Theorem~\ref{thm:wlgobridge} is based on the so-called Hopping Lemma due to Woodall~\cite{MR0439686}.

Throughout this section, when describing our set-up and stating our auxiliary results, we work in a fixed $2$-edge connected graph $G=(V,E)$, with $S=\{e_1,\ldots,e_{k+1}\}$ a collection of $k+1$ edges of $G$, and $H$ a shortest circuit through $e_1,\ldots,e_k$ in $G$. Any remaining assumptions featuring in Theorem~\ref{thm:wlgobridge} will only be used in the final proof of Theorem~\ref{thm:wlgobridge} itself at the very end of this section.

If $e_1,\dots,e_k$ lie on a cycle $C$, then $C-\{e_1,\dots,e_k\}$ naturally falls apart into components, each of which is a path. If as in our situation $e_1,\dots,e_k$ lie on a common circuit $H$, then $H-\{e_1,\dots,e_k\}$ also falls apart into segments: subtrails $H_1,\ldots,H_k$ of $H$ such that (after relabelling our edges) we have $H=H_1e_1H_2e_2 \ldots e_{k-1}H_ke_k$. Note, however, that different segments of $H-\{e_1,\dots,e_k\}$ are no longer vertex-disjoint (and so do not correspond to components of the \emph{subgraph} $H-\{e_1,\dots,e_k\}$, cf.\ Figure~\ref{fig:segment}).
 
\begin{dfn}\label{dfn:segment}\label{dfn:induced order on segments}
Given the circuit $H=H_1e_1H_2e_2 \ldots e_{k-1}H_ke_k$, we call $H_j$ the \emph{$j$-th segment} of $H$. Since $H$ is  shortest possible, every segment $H_j$ is a path. We let $<_j$ denote the path order on $V(H_j)$ induced by the circuit $H$. 

\begin{figure}[ht]
\centering

\begin{tikzpicture}[scale=1]

\definecolor{colore}{RGB}{255,0,0}
\definecolor{colorH1}{RGB}{92,65,93}
\definecolor{colorH3}{RGB}{236,117,5}
\definecolor{colorH2}{RGB}{1,41,150}

\pgfmathsetmacro\r{2}
\pgfmathsetmacro\angle{27}
\pgfmathsetmacro\captdist{5}

\foreach \s in {0,...,5}
{
\node [draw, circle,scale=.3, fill] (nodeu\s) at (\r*\s, 0.5*\r) {};
\node [draw, circle,scale=.3, fill] (nodel\s) at (\r*\s,-0.5*\r) {};
}

\node [draw, circle,scale=.3, fill] (nodem2) at (\r*2, 0*\r) {};

\foreach \s in {3,...,4}
{
\node [draw, circle,scale=.3, fill] (nodeuu\s) at (\r*\s, 1*\r) {};
\node [draw, circle,scale=.3, fill] (nodell\s) at (\r*\s,-1*\r) {};
}

\draw [colore] (nodeu0) to node [rotate=90]{$>$} node [left=\captdist pt]{$e_3$} (nodel0);

\draw [colorH1] (nodeu0) to node {$>$} node [below=\captdist pt]{$H_1$} (nodeu1);
\draw [colorH3] (nodel0) to node {$<$} (nodel1);

\draw [colorH1] (nodeu1) to node [rotate=180-\angle]{$<$} (nodem2);
\draw [colorH3] (nodel1) to node [rotate=\angle]{$<$} (nodem2);

\draw [colorH2] (nodem2) to node [rotate=\angle]{$<$} (nodeu3);
\draw [colorH1] (nodem2) to node [rotate=180-\angle]{$<$} (nodel3);

\draw [colorH1] (nodel3) to node {$>$} (nodel4);
\draw [colorH1] (nodel4) to node {$>$} (nodel5);

\draw [colorH2] (nodeu3) to node {$<$} (nodeu4);
\draw [colorH2] (nodeu4) to node {$<$} node [below=\captdist pt]{$H_2$} (nodeu5);

\draw [colore] (nodeu5) to node [rotate=90]{$>$} node [right=\captdist pt]{$e_1$}(nodel5);

\draw [colorH3] (nodem2) to node [rotate=90]{$<$} (nodeu2);
\draw [colorH2] (nodem2) to node [rotate=90]{$<$} (nodel2);

\draw [colorH3] (nodeu2) to node [rotate=\angle]{$<$} (nodeuu3);
\draw [colorH2] (nodel2) to node [rotate=180-\angle]{$<$} (nodell3);

\draw [colorH3] (nodeuu3) to node {$<$} node [below=\captdist pt]{$H_3$}(nodeuu4);
\draw [colore] (nodell3) to node {$>$} node [above=\captdist pt]{$e_2$} (nodell4);

\draw [colorH3] (nodeuu4) to node [rotate=90]{$>$} (nodeu4);
\draw [colorH3] (nodeu4) to node [rotate=90]{$>$} (nodel4);
\draw [colorH3] (nodel4) to node [rotate=90]{$>$} (nodell4);

\end{tikzpicture}

\caption{A circuit $H=H_1e_1H_2e_2H_3e_3$ with segments $H_1,H_2,H_3$.}
\label{fig:segment}
\end{figure}
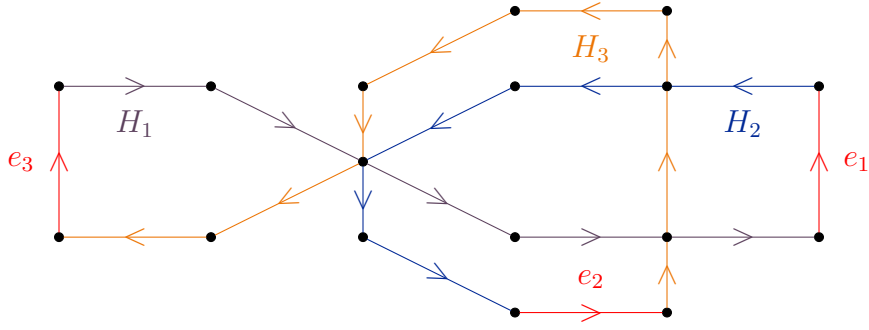
\end{dfn}

\begin{dfn}\label{dfn:inside, closure and frontier}
Given the circuit $H$ with segments $\{H_j\colon j \in [k]\}$, for $U\subseteq V$ and $j\in[k]$, we define (cf.\ Definition~\ref{dfn:intervals})

\begin{enumerate} [label=($\arabic*$)]
	\item \label{dfn:item:inside_j} $\Ins_j(U) := U\cap V(H_j)$ as the \emph{vertices of $U$ on the $j$-th segment of $H$},	
	\item \label{dfn:item:closure_j} $\Cl_j(U) := [\min_{<_j} \Ins_j(U), \max_{<_j} \Ins_j(U)]_{<_j}$ as the \emph{closure of $U$ on the $j$-th segment of $H$},
	\item \label{dfn:item:closure} $\Cl(U) :=  \bigcup_{\l\in [k]} \Cl_{\l}(U)$ as the \emph{closure of $U$ in $H$},

	\item \label{dfn:item:frontier_j} $\Fr_j(U) :=  \{ \min_{<_j} \Ins_j(U), \max_{<_j} \Ins_j(U)  \} $ as the \emph{frontier of $U$ on the $j$-th segment of $H$} and
	\item \label{dfn:item:frontier} $\Fr(U) :=  \bigcup_{\l\in [k]}  \Fr_{\l}(U) $ as the \emph{frontier of $U$ in $H$}.
\end{enumerate}
\end{dfn}

Note that due to the fact that different segments can intersect, the set inclusions $\Cl(U) \subseteq \Cl(\Cl(U))$, $\Cl_j(U) \subseteq \Ins_j(\Cl(U))$ and $\Fr_j(U) \subseteq \Ins_j(\Fr(U))$ might be proper.

\begin{fact}\label{fact:subtrail} For $j\in[k]$ and $U\subseteq V$, we have $\Cl_j(U)$ is a subtrail of $H_j$.
\end{fact}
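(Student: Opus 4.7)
The plan is to unpack the definitions, leveraging the standing hypothesis from the setup of Section~4 that $H$ is a \emph{shortest} circuit through $e_1,\ldots,e_k$, which by Definition~\ref{dfn:segment} guarantees that each segment $H_j$ is a path. Writing $H_j = v_0 v_1 \cdots v_r$, the induced order $<_j$ is nothing but the linear order $v_0 <_j v_1 <_j \cdots <_j v_r$ of vertices along $H_j$, and in particular every vertex of $H_j$ occurs exactly once in this list.

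First I would dispose of the degenerate case $\Ins_j(U) = \emptyset$: then $\Cl_j(U)$ is (naturally read as) the empty subtrail of $H_j$, so the claim is vacuous. Otherwise, set $v_s := \min_{<_j} \Ins_j(U)$ and $v_t := \max_{<_j} \Ins_j(U)$ with $0 \le s \le t \le r$. By Definition~\ref{dfn:intervals},
\[
\Cl_j(U) = [v_s, v_t]_{<_j} = \{v_s, v_{s+1}, \ldots, v_t\},
\]
which is exactly the vertex set of the subpath $P := v_s v_{s+1} \cdots v_t$ of $H_j$. To witness that $P$ is a subtrail of $H_j$ in the sense of Definition~\ref{dfn:properties of trails}, one takes the witnessing interval $I_P = \{s, s+1, \ldots, t\} \subseteq [0,r]$; the required identity $p_h = (H_j)_{s+h}$ for all $h \in [0,t-s]$ is then immediate from the definition of $P$.

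The main obstacle --- or really the only point worth flagging --- is that the argument genuinely needs $H_j$ to be a path, since otherwise a vertex could occur at several positions along the segment and $<_j$ would fail to be a well-defined linear order on $V(H_j)$. The shortness of $H$ is precisely what rules this out, so once that is invoked, the fact reduces to the observation that a closed interval in the path order of $H_j$ is the vertex set of a contiguous subpath, which in turn is a subtrail.
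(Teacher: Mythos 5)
Your argument is correct, and it is exactly the reasoning the paper intends: the Fact is stated without proof precisely because, once one knows each $H_j$ is a path (which the shortest-circuit assumption guarantees), a closed interval in the path order $<_j$ is a contiguous subpath and hence a subtrail. You rightly flag that the shortness of $H$ is the one nontrivial ingredient, so there is nothing missing.
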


\begin{dfn}\label{dfn:properties of reach}
For $x,y\in V(G)$ and $X\subseteq V$, we say
\begin{enumerate}[label=($\arabic*$)]
	\item\label{item:admissible} an $x{-}y$ trail $P$ is \emph{admissible}, if it is in $G-E(H)-e_{k+1}$ and $V(P)\cap V(H)\subseteq  \{ x,y  \}$, and
	\item $\Reach(X) :=  \{y'\in V(H)\colon \exists x'\in X\,\exists $ admissible $x' {-} y'$ trail$ \}$ as \emph{reach of $X$ after $H$}.
\end{enumerate}
We stress that the inner vertices of an admissible $x {-} y$ trail are not in $V(H)$.
\end{dfn}

\begin{dfn}\label{dfn:A and B}
 We define an increasing sequence $(A_i)_{i\in\NN}$ recursively by

\begin{enumerate} [label=($\arabic*$)]
	\item $A_0:=\emptyset$,
	\item \label{dfn:item: A_0} $A_1:=\Reach( \{a  \})$, and
	\item \label{dfn:item: A_n} if $A_{i}$ is already defined for some $i\geq 1$, then  $A_{i+1}:=\Reach(\Cl(A_{i}))$.
\end{enumerate}
Further, we set $A:= \bigcup_{i\in\NN}A_i$. Analogously, we define an increasing sequence $(B_i)_{i\in\NN}$ and $B$ by interchanging $a$ with $b$.
\end{dfn}

The idea behind this definition is the simple observation that if $A_1$ and $B_1$ intersect the same segment of $H$, then we clearly would be done. This will not always be possible, and so we iterate this procedure again and again, until we do find one vertex in $A$ and one vertex in $B$ that are contained in the same segment of $H$, as Lemma~\ref{lemma:existence of a starting segment} below shows.

We remark that Definition~\ref{dfn:A and B} of $(A_i)_{i\in\NN}$ differs from Woodall's in that Woodall's admissible paths (see $x \star y$ in \cite{MR0439686}) from $A_{i}$ to new vertices of $A_{i+1}$ are not allowed to start from the frontier of $A_{i}$. 

\begin{lemma}\label{lemma:existence of a starting segment}
	If $\Ins_j(A)=\emptyset$ or $\Ins_j(B)=\emptyset$ for every $j\in[k]$, then $G$ contains an odd cut of size at most $k+1$.
\end{lemma}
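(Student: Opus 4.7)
The plan is to produce the required odd cut as $\partial_G U$ for an explicit ``$A$-side'' set
\[
U := \{a\} \cup \Cl(A) \cup W,
\]
where $W$ collects all $v \in V \setminus V(H)$ that some $x \in \{a\} \cup \Cl(A)$ can reach by a path in $G_0 := G - E(H) - e_{k+1}$ whose internal vertices all lie in $V \setminus V(H)$. The first key step is to show that no edge of $G_0$ crosses $U$, so that $\partial_G U \subseteq E(H) \cup \{e_{k+1}\}$. To verify this, I would take any $xy \in E(G_0)$ with $x \in U$ and $y \notin U$, and case-split on the location of $x$ (one of $a$, $\Cl(A)$, or $W$) and of $y$ (on or off $V(H)$): the defining identities $A_1 = \Reach(\{a\})$ and the stability $A = \Reach(\Cl(A))$, together with how witnessing paths extend into $W$, will always force $y \in U$, a contradiction.

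Once $\partial_G U \subseteq E(H) \cup \{e_{k+1}\}$ is secured, the parity is short: since $H$ is a circuit and hence Eulerian, a handshake on $V(H) \cap U$ gives $|\partial_{E(H)} U|$ even, so $|\partial_G U|$ is odd exactly when $b \notin U$. Writing $I_A := \{j \in [k] : \Ins_j(A) \neq \emptyset\}$ and defining $I_B$ analogously, the hypothesis reads $I_A \cap I_B = \emptyset$, so $|I_A| + |I_B| \leq k$, and I may assume without loss of generality that $|I_A| \leq k/2$ (swapping the roles of $a$ and $b$ otherwise). For the size bound I would traverse $H = H_1 e_1 H_2 \cdots e_k$ cyclically and count crossings of $U$ inside each segment and at each designated edge. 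I expect the bookkeeping to yield
\[
|\partial_G U| = 2(|J|-M) + [b \notin U],
\]
where $J := \{j \in [k] : \Ins_j(U) \neq \emptyset\}$ and $M$ is the number of designated edges $e_i$ both of whose endpoints lie in $U$. The target inequality is then $|J|-M \leq |I_A| \leq k/2$, yielding $|\partial_G U| \leq k+1$.

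The step I expect to be hardest is handling the fact that different segments of $H$ may share vertices: such sharings can inflate $J$ beyond $I_A$ (by pulling vertices of $\Cl_i(A)$ for $i \in I_A$ into $V(H_j)$ for some $j \notin I_A$) and can also force $\Ins_j(U)$ to fail to be an interval in $<_j$, so that the within-segment boundary count is not bounded by $2$ per segment. The remedy I have in mind is to iterate the $\Cl$-closure together with the off-$V(H)$ reach until stabilisation, replacing $U$ by the smallest superset $U^*$ of $\{a\}$ closed under $\Cl$ on $V(H)$, under $\Reach$, and under $G_0$-neighbours; then every $\Ins_j(U^*)$ is an interval (by Fact~\ref{fact:subtrail}), so the formula above applies, and one must check that the enlargement of $J$ past $I_A$ is compensated by a corresponding enlargement of $M$. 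Finally, if the resulting $U^*$ happens to contain $b$, I would rerun the analogous construction on the $B$-side and use the bridge property of $e_{k+1}$ in $G - E(H)$ together with $I_A \cap I_B = \emptyset$ to argue that at least one of the two cuts must be odd.
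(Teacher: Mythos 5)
Your cut is essentially the one the paper uses: your $U$ coincides with the set $D=\{v\in V\colon \exists a'\in A\cup\{a\}\ \exists\text{ admissible } a'{-}v \text{ trail}\}$ from the paper's proof, and the three main steps (no edge of $G-E(H)-e_{k+1}$ crosses $U$; parity of $\partial_H U$ from $H$ being even; pigeonhole over segments to get the size bound) are the same. One observation dissolves the step you flag as hardest: $A$ is already a fixed point of the closure-and-reach iteration. Since the one-vertex trail is admissible, $\Reach(X)\supseteq X\cap V(H)$, so $A_{i+1}=\Reach(\Cl(A_i))\supseteq\Cl(A_i)$ and hence $A=\Cl(A)$. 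Consequently $U\cap V(H)=A$, each $\Ins_j(U)=\Ins_j(A)=\Cl_j(A)$ is an interval of $<_j$, your index set $J$ equals $I_A$, the correction term $M$ and the re-closed set $U^*$ are unnecessary (the latter equals $U$), and the bound $|\partial_H U|\le 2|I_A|\le 2\lfloor k/2\rfloor\le k$ follows directly.

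The genuine gap is the step that makes the cut odd: you must show $b\notin U$, and your plan for this does not work. The bridge property of $e_{k+1}$ in $G-E(H)$ is not among the hypotheses available for this lemma (the paper invokes it exactly once, at a later point), and in any case it would not by itself exclude an admissible trail from some $a'\in A$ to $b$. Worse, the fallback of rerunning the construction on the $B$-side forfeits the size bound, since you chose the $a$-side precisely because $|I_A|\le k/2$ while $|I_B|$ may be as large as $k$; and by reversing admissible trails, $b\in U$ essentially puts $a$ into the $B$-side set as well, so "at least one of the two cuts is odd" is exactly the claim you are trying to avoid proving. The correct argument is that $b\in U$ is impossible under the hypothesis: if $b\in U$, there is an admissible trail $T$ from some $a'\in A\cup\{a\}$ to $b$; combining $T$ with an admissible $b{-}B_1$ trail (which exists because $G-e_{k+1}$ is connected, so $B_1\neq\emptyset$) and extracting a path yields an admissible trail witnessing a vertex of $A\cap B$. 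But $A\cap B=\emptyset$, since $A,B\subseteq V(H)=\bigcup_{j}V(H_j)$ and for every $j$ one of $\Ins_j(A)$, $\Ins_j(B)$ is empty. Without this (or an equivalent) argument, your cut could be even and the lemma would not follow.
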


\begin{proof}
    First of all, since $G$ is $2$-edge-connected, both $A$ and $B$ are non-empty: Since $G-e_{k+1}$ is connected, any $a{-}V(H)$ path in $G-e_{k+1}$ is an admissible trail which witnesses the non-emptiness of $A_1\subset A$, and similarly for $B$.
    
	Since $A,B \subseteq V(H)$ and $\Ins_j(A)=\emptyset$ or $\Ins_j(B)=\emptyset$ for every $j\in[k]$, $A$ and $B$ are disjoint. Further, from the pigeonhole principle it  follows without loss of generality, that $  | \{j\in[k]\colon \Ins_j(A)\neq\emptyset  \}  |\leq \left\lfloor \frac{k}{2} \right\rfloor$. Then $$|\partial_H A |= | \bigcup_{j\in [k]} \partial_{(e_{j-1}H_j e_j)} \Ins_j(A) | \leq \sum_{j\in [k]} |\partial_{(e_{j-1}H_j e_j)} \Ins_j(A) | = 2 \cdot  \left\lfloor \tfrac{k}{2} \right\rfloor,$$ and since $H$ induces an even subgraph, $|\partial_H A|$ is even. Thus,  $C:=\partial_H A\cup  \{e_{k+1}  \}$ is odd and has size $|C|\leq2\cdot\left\lfloor \frac{k}{2} \right\rfloor +1 \leq k+1$. 
	
	To complete the proof, it remains to show that $C$ is a cut in $G$. For this, we consider 
	$$D = \{v \in V(G) \colon \exists  a' \in A \cup \{a\}\, \exists \text{ admissible $a'{-}v$ trail}\},$$
	and claim that $\partial_{G}D=C$.
		
	To see $C\subseteq \partial_G D$, note that $\partial_H A \subset \partial_G D$ by definition of $A$. For $e_{k+1} \in \partial_G D$, suppose to the contrary that $b\in D$. Then there exists an admissible $b{-}(A\cup\{a\})$ trail $T$. Since $B \neq \emptyset$, $T$ combined with an admissible $b{-}B$ trail witnesses that $A \cap B \neq \emptyset$, a contradiction.
	
	To prove $\partial_G D\subseteq C$, let us suppose for a contradiction that there exists some edge $e=uv~\in~(\partial_G D)~\setminus~C$ with say $u\in D$ and $v \notin D$. Since $u\in D$, we can pick an admissible trail $T$ starting in some $a'\in A\cup \{a\}$ and ending  in $u$. If $e \in E(H)$, then $u\in V(H)$ and thus $u\in A$ by Definition~\ref{dfn:A and B}. Now $e\in \partial_H(A)$, which contradicts $e\notin C$. So, we assume $e\notin E(H)$. If $u\in V(H)$, then $u \in A$ and the trail $uv$ is a witness for $v\in D$. Otherwise, $Tuv$ is a witness. In any case, this contradicts $v\notin D$.
\end{proof}

Now that we know that $A$ and $B$ intersect the same segment $H_j$ of $H$, it is clear that there is a natural trail in $H$ starting at a vertex of $A$, ending at a vertex of $B$, and containing all of the edges $e_1,\ldots,e_k$. If we consider the `first time' that $A_n$ and $B_m$ intersect a given segment $H_j$, then this trail has the following three crucial properties of Definition~\ref{dfn:coherent}, as Lemma~\ref{lemma:trivial coherent trail} shows.

\begin{dfn}\label{dfn:coherent}
 For $n,m\in \NN$, we say a trail $Q=q_0\dots q_{w}$ is \emph{$A_{n}{-}B_{m}{-}$coherent}, if

\begin{enumerate}[label=(C$_\arabic*$)]
	\item\label{itemC1} $e_1,\dots,e_k\in E(Q)$, $q_0\in A_{n+1}$ and $q_w\in B_{m+1}$,
	\item\label{itemC2} for every $s\in[w]$ with $q_{s-1} q_{s}\in E\setminus E(H)$, there exist $r,t\in[0,w]$ with $q_r,q_t\in V(H)$ and $r < s \leq t$ such that $q_r Q q_t$ is an admissible $q_r{-}q_t$ trail and each of the sets $A_{n+1}$ and $B_{m+1}$ contains at most one of $q_r$ and $q_t$, and
	\item\label{itemC3} for every $j\in[k]$, $\Cl_j(A_{n})$ and $\Cl_j(B_{m})$ are subtrails of $Q$ with witnessing intervals $I_{A_{n},j}$ and $I_{B_{m},j}$ such that $I_{X,j}\cap I_{Y,j'} =\emptyset$ for every $X,Y\in \{A_{n},B_{m}\}$ and every two distinct $j \neq j'\in [k]$.
\end{enumerate}
\end{dfn}

\begin{lemma}\label{lemma:trivial coherent trail}
If $\Cl_j(A_{n^*})\neq \emptyset \neq \Cl_j(B_{m^*})$ for some $j \in [k]$, then there exists an $A_{n}{-}B_{m}{-}$ coherent trail for some $n<n^*$, $m<m^*$.
\end{lemma}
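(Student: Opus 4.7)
The plan is to build a \emph{trivial} coherent trail $Q$ that simply traces the circuit $H$ around once, with endpoints chosen as early as possible on the distinguished segment $H_j$ from the hypothesis.

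Since $\Cl_{j'}(U)\neq\emptyset$ exactly when $\Ins_{j'}(U)\neq\emptyset$, the hypothesis yields $\Ins_j(A_{n^*}),\Ins_j(B_{m^*})\neq\emptyset$, while $\Ins_j(A_0)=\Ins_j(B_0)=\emptyset$. Hence
$n_j:=\min\{i:\Ins_j(A_i)\neq\emptyset\}$ and $m_j:=\min\{i:\Ins_j(B_i)\neq\emptyset\}$
are well-defined with $1\leq n_j\leq n^*$ and $1\leq m_j\leq m^*$. I would set $n:=n_j-1$, $m:=m_j-1$, pick arbitrary $u\in\Ins_j(A_{n_j})$ and $v\in\Ins_j(B_{m_j})$, and let $Q$ be the trail inside $H$ from $u$ to $v$ that goes ``the long way around'': leave $u$ along $H_j$ in the direction opposite to $v$, exit $H_j$ via the adjacent edge $e_{j-1}$ or $e_j$, follow the rest of $H$ through every other segment, and re-enter $H_j$ from the opposite end to reach $v$. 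If $u=v$, take $Q=H$ starting and ending at $u$. Because the $H_j$-arc between $u$ and $v$ that is skipped by $Q$ lies entirely inside $H_j$, it contains none of $e_1,\ldots,e_k$, so $\{e_1,\ldots,e_k\}\subseteq E(Q)$.

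The verification of (C1)--(C3) is then a direct read-off from the definitions. (C1) holds by construction: $q_0=u\in A_{n_j}=A_{n+1}$, $q_w=v\in B_{m_j}=B_{m+1}$, and all $e_i$ are in $E(Q)$. (C2) is vacuous, since every edge of $Q$ lies in $E(H)$. For (C3), minimality of $n_j$ and $m_j$ forces $\Cl_j(A_n)=\Cl_j(B_m)=\emptyset$, so there is no constraint on segment $j$. For each $j'\neq j$, the trail $Q$ traverses $H_{j'}$ as a single contiguous block of consecutive indices, and so the closed intervals $\Cl_{j'}(A_n),\Cl_{j'}(B_m)\subseteq V(H_{j'})$ embed as subtrails of $Q$ with witnessing intervals inside that block. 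Since distinct segments occupy disjoint index ranges in $Q$ (being separated by the single edges $e_i$), the required disjointness across different segments is automatic.

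The only mildly delicate points are fixing the direction of traversal cleanly and treating the degenerate case $u=v$; both are purely local and do not obstruct the argument. I do not expect this lemma to need the iterative ``hopping'' machinery of Woodall's lemma at all---the genuine work will come in the subsequent lemmas that upgrade this trivial coherent trail into one from which the desired circuit through $e_1,\ldots,e_{k+1}$ can finally be extracted.
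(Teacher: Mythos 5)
Your proposal is correct and follows essentially the same route as the paper: choose $n,m$ minimal so that $\Ins_j(A_{n+1})$ and $\Ins_j(B_{m+1})$ are nonempty, take the trail from a vertex of $A_{n+1}$ to a vertex of $B_{m+1}$ the ``long way around'' $H$ so that it contains $e_1,\dots,e_k$ and every other segment, and observe that \ref{itemC2} is vacuous while \ref{itemC3} follows from $\Cl_j(A_n)=\Cl_j(B_m)=\emptyset$ together with the disjoint time intervals spent on distinct segments. The paper's proof is the same argument with the same choices.
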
 

\begin{proof}
	Let $j$ be in $[k]$ such that $\Cl_j(A_{n^*})\neq \emptyset \neq \Cl_j(B_{m^*})$. Choose $n<n^*$ and $m<m^*$ minimal such that $\Cl_j(A_{n+1})\neq \emptyset \neq \Cl_j(B_{m+1})$ and pick $a_{n+1}\in\Ins_j(A_{n+1})$ and $b_{m+1}\in\Ins_j(B_{m+1})$.\footnote{One could make a stronger minimality assumption by choosing $n,m$ minimal so that $\Cl_j(A_n)\neq \emptyset \neq \Cl_j(B_n)$ for some $j \in [k]$. Following the same proof, this gives rise to a trail $Q$ which satisfies the following stronger variant of \ref{itemC3}, namely $I_{X,j}\cap I_{Y,j'} =\emptyset$ for every $X,Y\in \{A_{n},B_{m}\}$ and every two (not necessarily distinct)  $j,j'\in [k]$. However, we do not need this stronger conclusion for the remainder of our proof.
	} We claim that the trail $Q$ with start vertex $a_{n+1}$ and end vertex $b_{m+1}$ along the circuit $H$ through $e_1,\dots,e_k$ and $H_{j'}$ as subtrail for every $j'\in [k]\setminus \{j \}$ is $A_{n}{-}B_{m}{-}$coherent as desired.
	
	Indeed, \ref{itemC1} holds by construction and \ref{itemC2} is an empty condition. Lastly, since $\Cl_j(A_{n})=\emptyset = \Cl_j(B_m)$, and all other segments $H_{j'}$ for $j' \in [k]\setminus \{j\}$ are subtrails of $Q$ with pairwise disjoint witnessing intervals by construction, also \ref{itemC3} holds for $Q$. \qedhere  
\end{proof}

 While conditions~\ref{itemC1} and \ref{itemC2} are straightforward adaptions from Woodall's notion of coherence~\cite{MR0439686}*{\S III} from paths to trails, a word on \ref{itemC3} might be in order. Given the `time-minimal' subtrail $Q$ of $H$ constructed in Lemma~\ref{lemma:trivial coherent trail}, we aim to modify $Q$ while preserving as much structure of $Q$, and hence of $H$, as possible. Since  segments of $H$ may intersect, the correct notion of `structure preserving' is to think about the trail in terms of time: Our initial trail $Q$ constructed in Lemma~\ref{lemma:trivial coherent trail} spends disjoint time intervals to cover the different segments of $H$ that contain vertices from $\Cl(A_n) \cup \Cl(B_n)$. When modifying $Q$, however, we can no longer require to completely cover all these segments. So instead, we only preserve the property that if $T$ and $S$ are subpaths of \emph{distinct} segments $H_j$ and $H_{j'}$ of the form $T \in \{\Cl_j(A_{n}),\Cl_j(B_{m})\}$ and $S \in \{\Cl_{j'}(A_{n}),\Cl_{j'}(B_{m})\}$, then we continue to spend disjoint time intervals to cover $T$ and $S$.

\begin{thm} \label{lemma:existence of desired circuit}
If there exists an $A_{n}{-}B_{m}{-}$coherent trail for some $n,m\in\NN$, then there also exists an $A_{0}{-}B_{0}{-}$coherent trail.
\end{thm}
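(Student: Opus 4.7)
The plan is to proceed by induction on $n+m$. The base case $n=m=0$ is immediate. For the inductive step, by the symmetry between $A$ and $B$ (which we obtain by reversing the given trail $Q$, swapping the roles of $q_0$ and $q_w$ in \ref{itemC1} and preserving \ref{itemC2}-\ref{itemC3}), we may assume $n \geq 1$ and try to construct an $A_{n-1}$-$B_m$-coherent trail from a given $A_n$-$B_m$-coherent trail $Q = q_0\ldots q_w$. If $q_0 \in A_n$ already, then $Q$ itself serves: \ref{itemC1} holds because $q_0 \in A_n = A_{(n-1)+1}$; \ref{itemC3} is inherited since $\Cl_{j'}(A_{n-1}) \subseteq \Cl_{j'}(A_n)$ for every $j'$, with sub-intervals nested inside the (pairwise disjoint) witnessing intervals of $Q$; and the set-condition in \ref{itemC2} becomes strictly weaker when $A_{n+1}$ is replaced by $A_n \subseteq A_{n+1}$. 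So henceforth I may assume $q_0 \in A_{n+1} \setminus A_n$.

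Using $q_0 \in A_{n+1} = \Reach(\Cl(A_n))$ I pick $x \in \Cl(A_n)$ together with an admissible $x$-$q_0$ trail $P$. Fix $j \in [k]$ with $x \in \Cl_j(A_n)$, let $\alpha,\gamma \in A_n$ denote the endpoints of $\Cl_j(A_n)$, and invoke \ref{itemC3} to write $\Cl_j(A_n) = q_p\ldots q_q =: R$ as a subtrail of $Q$, with $x = q_s$ for some $s \in [p,q]$. Splitting $R = R_1\cdot R_2$ at $x$, the hopping construction I propose reroutes the initial portion of $Q$ through $P$: setting $Q_1 := q_0 Q q_p$ and $Q_2 := q_s Q q_w$, the candidate trail is
$$Q' := \bar{Q_1} \cdot \bar{P} \cdot Q_2,$$
which starts at $q_p \in A_n = A_{(n-1)+1}$ and ends at $q_w \in B_{m+1}$, and still contains every $e_i$ because the discarded edges of $R_1$ all lie inside segment $H_j$. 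Swapping the roles of $q_p$ and $q_q$ if necessary ensures the retained half of $R$ still covers $\Cl_j(A_{n-1})$; should $\Cl_j(A_{n-1})$ straddle $x$, a more delicate case analysis (using either a different choice of $x$ within $\Cl(A_n)$, or a further symmetric hop on the $q_q$-side) is needed.

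The principal technical task is then to verify that $Q'$ is indeed an $A_{n-1}$-$B_m$-coherent trail. Edge-disjointness of the three pieces of $Q$ used in $Q'$ is automatic (disjoint portions of $Q$), and $E(P) \cap E(H) = \emptyset$ by admissibility rules out any overlap with the $H$-edges of $R_2$; the only remaining risk is that $P$ shares non-$H$ edges with $Q_1$ or $Q_2$, which is handled by choosing $P$ subject to a minimality condition (any shared edge would yield a shortcut) or by an exchange argument. For \ref{itemC3}, the witnessing intervals $I_{A_n,j'}$ and $I_{B_m,j'}$ with $j' \neq j$ lie outside $[p,q]$ in $Q$ by hypothesis and so persist in $Q'$ (up to reversal inside $\bar{Q_1}$, which does not affect being a subtrail), and pairwise disjointness for distinct segments is inherited; the instance $j' = j$ is secured by the choice of retained half of $R$ discussed above. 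For \ref{itemC2}, the freshly inserted non-$H$ edges of $P$ are collectively witnessed by $P$ itself with endpoints $x \in \Cl(A_n)$ and $q_0 \in A_{n+1}\setminus A_n$, the $A_n$-side set-condition being immediate since $q_0 \notin A_n$, and the $B_{m+1}$-side requiring a short additional argument; the remaining non-$H$ edges inherited from $Q$ retain their admissible subtrail witnesses (after the reversal inside $\bar{Q_1}$), with the weakening $A_{n+1} \to A_n$ of the set-condition exactly matching what is needed. Orchestrating all of these adjustments simultaneously—especially the delicate interplay between the position of $x$ on $H_j$, the placement of $\Cl_j(A_{n-1})$, and the potential overlaps of $P$ with the non-$H$ edges of $Q$—will be the main difficulty, and is where the Woodall-style hopping technique does its work.
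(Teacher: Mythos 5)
Your skeleton matches the paper's: reduce via Lemma~\ref{fact:coherent is decreasing} to the case $q_0\in A_{n+1}\setminus A_n$, pick $x\in\Cl_j(A_n)$ with an admissible $x{-}q_0$ trail $P$, and reroute the beginning of $Q$ through $\bar P$. But the step you explicitly defer --- ``should $\Cl_j(A_{n-1})$ straddle $x$, a more delicate case analysis \dots is needed'' --- is precisely where the theorem is won or lost, and your two proposed escape routes do not work. First, ``swapping the roles of $q_p$ and $q_q$'' does not produce a trail: any reroute must still end at $q_w\in B_{m+1}$ and traverse $P$ between $q_0$ and $x=q_s$, so it is forced to have the shape $q_c\bar Qq_0\bar Pq_sQq_w$ with $c\le s$; there is no symmetric hop on the $q_q$-side. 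Second, you cannot in general ``choose a different $x$'': the admissible trail from $\Cl(A_n)$ to $q_0$ may land at a unique $x$, and $\Cl_j(A_{n-1})$ may sit entirely inside the discarded interval $[p,s]$ (it cannot contain $x$ itself, since $x\in\Cl(A_{n-1})$ would force $q_0\in A_n$, but it can lie wholly on the wrong side of $x$). In that situation your $Q'$ simply fails \ref{itemC3}, and no $A_{n-1}{-}B_m{-}$coherent trail is produced. The paper's resolution is a genuinely different choice of hop-back point: $c$ is taken as the \emph{largest} index ${}\le d$ with $q_c\in\bigcup_{i\in[n]}\Fr_j(A_i)$, one then aims only for $A_{n'}{-}B_m{-}$coherence for a suitable $n'$, and the maximality of $c$ is what yields the key disjointness $I_{A_{n'},j}\cap[c,d]=\emptyset$ that makes \ref{itemC3} survive. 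This idea is absent from your proposal and is not recoverable by case analysis around your choice $c=\min I_{A_n,j}$.

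Two further points you wave at are also not routine. Edge-disjointness of $P$ from $Q$ does not follow from ``a minimality condition on $P$'' or a generic exchange argument; it needs property \ref{itemC2} of $Q$ in an essential way (a shared edge would, via the admissible subtrail guaranteed by \ref{itemC2}, place both of its witnessing endpoints $q_r,q_t$ in $A_{n+1}$, contradicting \ref{itemC2}) --- this is exactly Lemma~\ref{lemma:P and Q are disjoint}. And your ``$B_{m+1}$-side requiring a short additional argument'' is the case $x\in B_{m+1}$ (or $I_{B_m,j}\cap[c,d]\neq\emptyset$), which is not verified for $Q'$ at all but is instead diverted: it forces $\Cl_j(A_n)\cap\Cl_j(B_{m+1})\neq\emptyset$, whence Lemma~\ref{lemma:trivial coherent trail} already yields a coherent trail with smaller indices. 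As written, the proposal identifies the right moves up to the decisive one and then defers exactly the content of the hopping argument, so it does not constitute a proof.
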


For the proof, we need two easy lemmas.

\begin{lemma}\label{fact:coherent is decreasing}
Let $n,m\in\NN$ and $Q=q_0\dots q_{w}$ be an $A_{n}{-}B_m{-}$coherent trail.  If $n\geq 1$ and $q_0\in A_{n}$, then $Q$ is $A_{n-1}{-}B_m{-}$coherent, and if $m\geq 1$ and $q_w\in B_m$, then $Q$ is $A_{n}{-}B_{m-1}{-}$coherent. 
\end{lemma}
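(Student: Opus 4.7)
The plan is straightforward monotonicity bookkeeping. Since the sequence $(A_i)_{i\in\NN}$ is increasing by construction (Definition~\ref{dfn:A and B}), we have $A_{n-1}\subseteq A_n\subseteq A_{n+1}$, and the three coherence conditions for the pair $(A_{n-1},B_m)$ follow from the corresponding conditions for $(A_n,B_m)$ by tracing through these inclusions. I would prove only the first implication ($n\geq 1$, $q_0\in A_n$); the second is entirely symmetric by reversing the roles of $A$ and $B$ and working from the end vertex $q_w$ instead.

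First I would dispose of \ref{itemC1} and \ref{itemC2}. For \ref{itemC1}, the hypothesis $q_0\in A_n$ is literally the required new condition $q_0\in A_{(n-1)+1}$, while the remaining parts ($e_1,\dots,e_k\in E(Q)$ and $q_w\in B_{m+1}$) are unchanged. For \ref{itemC2}, given any edge $q_{s-1}q_s\in E\setminus E(H)$, the old condition provides indices $r<s\leq t$ with $q_rQq_t$ admissible and with at most one of $q_r,q_t$ lying in $A_{n+1}$; since $A_n\subseteq A_{n+1}$, at most one of $q_r,q_t$ then also lies in $A_n$, which is the required new condition. The clause about $B_{m+1}$ is unchanged.

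The only condition that genuinely needs the monotonicity at the level of $\Cl_j$ is \ref{itemC3}. Here I would observe that $A_{n-1}\subseteq A_n$ gives $\Ins_j(A_{n-1})\subseteq \Ins_j(A_n)$, so that (by Definition~\ref{dfn:inside, closure and frontier} and Fact~\ref{fact:subtrail}) $\Cl_j(A_{n-1})$ is a subpath of $\Cl_j(A_n)$ inside $H_j$ for every $j\in[k]$. Since $\Cl_j(A_n)$ is by assumption a subtrail of $Q$ with witnessing interval $I_{A_n,j}$, its subpath $\Cl_j(A_{n-1})$ is automatically a subtrail of $Q$ admitting a witnessing interval $I_{A_{n-1},j}\subseteq I_{A_n,j}$; the pairwise disjointness required in \ref{itemC3} for the new pair $(A_{n-1},B_m)$ across distinct segments is then inherited from the disjointness of the larger intervals for the pair $(A_n,B_m)$.

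I do not expect any real obstacle; the one mildly delicate point is that when $\Cl_j(A_{n-1})$ happens to be a single vertex (or empty), its witnessing interval in $Q$ need not be unique (cf.\ Fact~\ref{fact:item:unique witnessing interval}), so one has to \emph{choose} a witnessing interval inside $I_{A_n,j}$. But such a choice always exists, since any vertex of $\Cl_j(A_{n-1})$ lies in $\Cl_j(A_n)$ and is therefore witnessed at some index inside $I_{A_n,j}$; the empty case is trivial with the empty interval. With this choice made, the inclusion $I_{A_{n-1},j}\subseteq I_{A_n,j}$ continues to hold and the argument above goes through unchanged.
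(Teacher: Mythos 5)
Your proof is correct and follows essentially the same route as the paper's: symmetry, triviality of \ref{itemC1}, monotonicity of $(A_i)_{i\in\NN}$ for \ref{itemC2}, and the inclusion $I_{A_{n-1},j}\subseteq I_{A_n,j}$ of witnessing intervals for \ref{itemC3}. Your extra remark about choosing a witnessing interval inside $I_{A_n,j}$ when $\Cl_j(A_{n-1})$ is trivial is a sensible refinement that the paper glosses over, but it does not change the argument.
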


\begin{proof}
Due to the symmetry of the statements, we just check the conditions for $Q$ being $A_{n-1}{-}B_m{-}$coherent for $n\geq 1$. Property \ref{itemC1} is clear, and \ref{itemC2} is immediate from the fact that $(A_i)_{i\in\NN}$ is an increasing sequence. 

Finally, \ref{itemC3} follows from the fact that since $(A_i)_{i\in\NN}$ is increasing,  $\Cl_j(A_{n-1})$ is a subtrail of $\Cl_j(A_n)$, and hence we have $I_{A_{n-1},j} \subseteq I_{A_n,j}$ for the respective witnessing intervals for all $j \in [k]$. Since $I_{X,j}\cap I_{Y,j'} =\emptyset$ for every $X,Y\in \{A_{n},B_{m}\}$ and every two distinct $j, j'\in [k]$ holds by assumption, it follows that the same holds for every $X,Y\in \{A_{n-1},B_{m}\}$. \qedhere
\end{proof}

\begin{lemma}\label{lemma:P and Q are disjoint}
Let $n,m\in\NN$ and $v\in (\Cl(A_{n})\cup \{a\}) \cup (\Cl(B_{m})\cup \{b\})$. If $Q=q_0\dots q_{w}$ is an $A_{n}{-}B_{m}{-}$coherent trail and $P$ is an admissible $v{-}V(H)$ trail, then $Q$ and $P$ are edge-disjoint.
\end{lemma}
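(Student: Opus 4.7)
The plan is to argue by contradiction: suppose $E(P)\cap E(Q)\neq\emptyset$ and derive a contradiction from condition~\ref{itemC2} of the $A_{n}$-$B_{m}$-coherence of $Q$. By symmetry I will assume $v\in\Cl(A_{n})\cup\{a\}$.

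First I would pick $e^{\ast}$ to be the \emph{first} edge of $P$, traversed from $v$, that lies in $E(Q)$. Since $P$ is admissible, $e^{\ast}\notin E(H)\cup\{e_{k+1}\}$, so \ref{itemC2} applied to $e^{\ast}$ yields indices $r<s\leq t$ with $e^{\ast}=q_{s-1}q_{s}$ such that $R:=q_{r}Qq_{t}$ is an admissible subtrail of $Q$ and $A_{n+1}$ contains at most one of $q_{r},q_{t}$. Writing $e^{\ast}=p_{i-1}p_{i}$ as the $i$-th edge of $P$ with, WLOG, $p_{i-1}=q_{s-1}$, I set $P_{1}:=vPp_{i-1}$, $R_{1}:=q_{r}Rq_{s-1}$, $R_{2}:=q_{s-1}Rq_{t}$ and form
\[
T_{1}:=P_{1}\overline{R_{1}}\qquad\text{and}\qquad T_{2}:=P_{1}R_{2}.
\]
By the minimality of $e^{\ast}$, $E(P_{1})\cap E(R)=\emptyset$, so $T_{1}$ and $T_{2}$ are genuine trails from $v$ to $q_{r}$ and from $v$ to $q_{t}$, respectively. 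The routine but essential check is that $T_{1}$ and $T_{2}$ are \emph{admissible} in the sense of Definition~\ref{dfn:properties of reach}: their only candidate inner vertex lying in $V(H)$ is the splicing vertex $p_{i-1}=q_{s-1}$, but this is either an inner vertex of $P$ or of $R$ (and hence not in $V(H)$ by admissibility of $P$, resp. of $R$), or else it collapses to an endpoint of $T_{i}$ in a boundary case (when $i=1$ or when $s=r+1$).

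With admissibility in hand, Definition~\ref{dfn:A and B} yields
\[
q_{r},\,q_{t}\in\Reach(\{v\})\subseteq\Reach(\Cl(A_{n})\cup\{a\})\subseteq A_{n+1},
\]
contradicting the fact that $A_{n+1}$ contains at most one of $q_{r},q_{t}$ as guaranteed by \ref{itemC2}.

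The main obstacle I foresee is the degenerate case in which $q_{r}=q_{t}$ as vertices of $G$, i.e.\ where $R$ is a closed admissible excursion of $Q$ around a single vertex of $V(H)$. In that case both $T_{1}$ and $T_{2}$ land at the same vertex and a strict set-theoretic reading of \ref{itemC2} produces no contradiction. I would resolve this either by interpreting \ref{itemC2} with multiplicity (which feels natural, since the two endpoints of $R$ are distinguished by their indices in $Q$ even when they coincide as vertices), or by a preliminary reduction that strips closed admissible excursions off $Q$—an operation which preserves all three coherence conditions \ref{itemC1}--\ref{itemC3}.
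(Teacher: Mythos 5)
Your argument is essentially identical to the paper's proof: take the first edge of $P$ lying on $Q$, invoke \ref{itemC2} to obtain the admissible subtrail $q_rQq_t$, and splice $vPq_{s-1}$ onto both halves of it to conclude $q_r,q_t\in A_{n+1}$, contradicting \ref{itemC2}. Your worry about the degenerate case $q_r=q_t$ is a fair observation about the write-up, but it is handled exactly by your first suggested fix---reading ``at most one of $q_r$ and $q_t$'' with multiplicity---which the rerouting step in the proof of Theorem~\ref{lemma:existence of desired circuit} does maintain (there, when the two endpoints of the inserted admissible trail coincide, neither lies in the relevant set), so no surgery on $Q$ is needed.
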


\begin{proof}
By symmetry we may assume that $v \in \Cl(A_{n})\cup \{a\}$. Suppose for a contradiction that $P$ and $Q$ are not edge-disjoint. Choose $s\in[w]$ such that $q_{s-1}q_{s}$ is the first edge of $P$ that is also in $E(Q)$. Since $q_{s-1}q_{s}\in E(P)\subseteq E\setminus E(H)$ by Definition~\ref{dfn:properties of reach}, it follows from property~\ref{itemC2} of $A_{n}{-}B_{m}{-}$coherent that there are $r,t\in[0,w]$ with $r < s \leq t$ and $q_r,q_t\in V(H)$ such that $q_rQq_t$ is an admissible $q_r{-}q_t$ trail and each set $A_{n+1}$ and $B_{m+1}$ contains at most one of $q_r$ and $q_t$. But since $q_{s-1}q_{s}$ is the first edge of $P$ in $E(Q)$, both $vPq_{s-1}\bar{Q}q_r$ and $vPq_{s-1}Qq_t$ are admissible trails witnessing that $q_r,q_t \in A_{n+1}$ (cf.~Definition~\ref{dfn:A and B}\ref{dfn:item: A_n}), a contradiction.
\end{proof}

\begin{proof}[Proof of Theorem~\ref{lemma:existence of desired circuit}]
Let $n,m$ be minimal such that there is an $A_{n}{-}B_{m}{-}$coherent trail $Q=q_0\dots q_{w}$ with start vertex $q_0 = a_{n+1} \in A_{n+1}$ and end vertex $q_w = b_{m+1}\in B_{m+1}$. We claim that $n=m=0$. Otherwise, without loss of generality we may assume $n \geq 1$. By Lemma~\ref{fact:coherent is decreasing} and the minimality assumption, we have $a_{n+1} \in A_{n+1}\setminus A_{n}$. We write $Q$ as $a_{n+1}Q q_c Qq_dQb_{m+1}$ where $c,d\in[0,w]$ are defined as follows:
	\begin{itemize}
	\item[(a)] Since $a_{n+1} \in A_{n+1}\setminus A_{n}$ and by Definition~\ref{dfn:A and B}\ref{dfn:item: A_n} of $A_{n+1}$, there is an $x\in \Cl(A_{n})$ such that there exists an admissible $x{-}a_{n+1}$ trail $P$ (which might be trivial). From Definition~\ref{dfn:inside, closure and frontier}\ref{dfn:item:closure} of the closure it follows that there is an $j \in [k]$ such that $x\in\Cl_j(A_{n})$. By property~\ref{itemC3} of $A_{n}{-}B_{m}{-}$coherent, $\Cl_j(A_{n})$ is a subtrail of $Q$ with witnessing interval $I_{A_{n},j}\subseteq[0,w]$. Now, we choose $d\in I_{A_{n},j}$ as the unique index with $q_d=x$.
	\item[(b)] Next, choose $c:=\max  \{ r\in[0,w]\colon q_r \in  \bigcup_{i\in[n]} \Fr_j(A_i) \land r \leq d  \}$. If $r:=\min I_{A_{n},j}$, then $q_{r} \in \Fr_j(A_{n})$ and obviously $r\leq d$. Hence, $c$ exists.
	\end{itemize}

\begin{figure}[ht]
\centering
\begin{tikzpicture}[scale=1]

\definecolor{colorQ}{RGB}{0,130,90}
\definecolor{colorQ'}{RGB}{236,117,5}
\definecolor{colorP}{RGB}{92,65,93}
\definecolor{colorI}{RGB}{1,41,150}

\pgfmathsetmacro\r{2}

\foreach \s/\capt  in {0/$a_{n+1}$, 1/$q_{\min I_{A_{n},j}}$,2/$q_c$,3/$q_d$,4/$ $,5/$b_{m+1}$}
{
\node [draw, circle,scale=.3, fill, label={[label distance=5pt]90:\capt}] (nodem\s) at (\r*\s, 0) {};
}

\foreach \s/\t/\whereone/\captone/\capttwo/\captthree/\captfour/\captfive/\captsix in {0/1/{}/{$<$}/{}/{}/{}/$[$/{} ,1/2/{}/{$<$}/{}/{}/{}/{}/{},2/3/{}/{}/{}/{}/{}/{}/{},3/4/{}/{$>$}/{$Q'$}/{}/{}/{}/{},4/5/{}/{$>$}/{}/{$Q$}/{$]$}/{}/{$\Cl_j(A_n)$}}
{

\draw [colorQ] (nodem\s) to node [\whereone, text=colorQ']{\captone} node [below=5pt, text=colorQ']{\capttwo} node [below=5pt, text=colorQ]{\captthree} node [at start, text=colorI]{\captfour} node [at end, text=colorI]{\captfive} node [above=5pt, at start, text=colorI]{\captsix}(nodem\t);
}

\draw [colorP] (nodem0) to [out=315, in=225] node [text=colorQ']{$>$} node [below=5pt]{$P$} (nodem3);
 
\end{tikzpicture}

\caption{Obtaining the rerouted trail $Q'$ from $Q$.}
\label{fig:rerouting}
\end{figure}
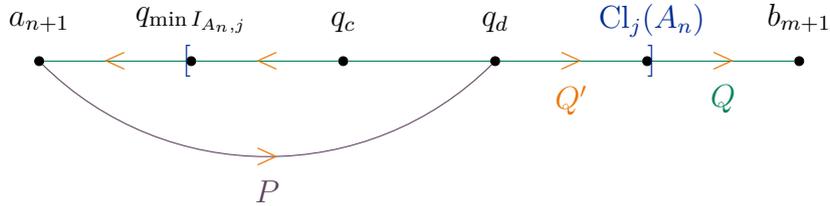

Further, we set $n':=\min  \{i\in [n]\colon q_c\in \Fr_j(A_{i+1})  \}$ and observe
\begin{enumerate}[label=($\arabic*$)]
	\item \label{proof:I_A cap [c,d] empty} $I_{A_{n',j}} \cap [c,d]=\emptyset$, 
	\item \label{proof:I_B cap [c,d] non-empty implies IA} if $I_{B_{m},j}\cap [c,d] \neq \emptyset$, then $\Cl_j(A_{n})\cap\Cl_j(B_{m+1}) \neq \emptyset$,
	\item \label{proof:q_d in B_m implies IA} if $q_d \in B_{m+1}$, then $\Cl_j(A_{n})\cap\Cl_j(B_{m+1}) \neq \emptyset$, and
	\item \label{proof:P and Q are disjoint} $P$ and $Q$ are edge-disjoint.
\end{enumerate}
	
\begin{proof}[Proof of \ref{proof:I_A cap [c,d] empty}]\renewcommand{\qedsymbol}{}
	We assume for a contradiction that $I_{A_{n',j}} \cap [c,d]\neq \emptyset$. Then, either choosing $r$ as ${\min (I_{A_{n',j}}\cap [c,d])}$ or ${\max (I_{A_{n',j}}\cap [c,d])}$ will lead to $q_r \in \Fr_j(A_{n'})$, which is a contradiction to the choice of $c$ or $n'$ because $c\leq r\leq d$. 
\end{proof}
	
\begin{proof}[Proof of \ref{proof:I_B cap [c,d] non-empty implies IA}]\renewcommand{\qedsymbol}{}
	Let 	$I_{B_{m},j}\cap [c,d]\neq\emptyset$. So, $I_{A_{n},j}\cap I_{B_{m},j}\neq\emptyset$ because $[c,d]\subseteq I_{A_{n},j}$. Further, $\Cl_j(A_{n})\cap\Cl_j(B_{m})\subseteq \Cl_j(A_{n})\cap\Cl_j(B_{m+1})$ implies then that $\Cl_j(A_{n})\cap\Cl_j(B_{m+1})\neq\emptyset$. 
\end{proof}	
	
\begin{proof}[Proof of \ref{proof:q_d in B_m implies IA}]\renewcommand{\qedsymbol}{}
	If $q_d \in B_{m+1}$, then, $q_d \in \Cl_j(A_{n}) \cap \Cl_j(B_{m+1}) \neq \emptyset$.
\end{proof}
	
\begin{proof}[Proof of \ref{proof:P and Q are disjoint}]\renewcommand{\qedsymbol}{}
	Since $q_d \in \Cl_j(A_{n})\subseteq\Cl (A_{n})$ and $Q$ is $A_{n}{-}B_{m}{-}$coherent, this follows from Lemma \ref{lemma:P and Q are disjoint}. 
\end{proof}
		
If $I_{B_{m},j} \cap [c,d] \neq \emptyset$ or $q_d \in B_{m+1}$, then \ref{proof:I_B cap [c,d] non-empty implies IA} or \ref{proof:q_d in B_m implies IA} imply that $\Cl_j(A_{n})  \cap \Cl_j(B_{m+1}) \neq \emptyset$, which by Lemma~\ref{lemma:trivial coherent trail} gives rise to a coherent trail that contradicts the minimality of $n$ and~$m$.
Hence, we assume $I_{B_{m},j}\cap [c,d] = \emptyset$ and $q_d \notin B_{m+1}$.  

Now we reroute $Q$ and obtain $Q':=q_c\bar{Q}a_{n+1}\bar{P}q_dQb_{m+1}$, see Figure~\ref{fig:rerouting}. From \ref{proof:P and Q are disjoint} it follows that $Q'$ is a trail. We show that $Q'$ is $A_{n'}{-}B_{m}{-}$coherent, contradicting the minimality of $n$ and $m$:

\begin{itemize}
\item[(C$_1$)] Since $E(q_c\dots q_d)\subseteq E(H_j)$ and since all our edges satisfy $e_{i}\notin E(H_j)$, the fact that $Q$ satisfied \ref{itemC1} implies that $Q'$ uses $e_1,\dots,e_k$. Also, the start vertex $q_c$ is in $\Fr_j(A_{n'+1})\subseteq A_{n'+1}$ and the end vertex $b_{m+1}$ is still in $B_{m+1}$.
\item[(C$_2$)] Because $a_{n+1}\notin A_n\supseteq A_{n'+1}$ and $q_d\notin B_{m+1}$, each of the sets $A_{n'+1}$ and $B_{m+1}$ contains at most the start or the end vertex of $P$. Also, the $q_d{-}a_{n+1}$ trail $P$ is admissible. This implies that \ref{itemC2} is true for edges that are in $P$.
For edges that are not in $P$, it follows directly from $Q$'s \ref{itemC2} and $q_c,q_d \in V(H)$.
\item[(C$_3$)] Due to \ref{proof:I_A cap [c,d] empty} and $I_{B_{m},j} \cap [c,d]=\emptyset$, the trails $\Cl_{j'}(A_{n'})$ and $\Cl_{j'}(B_{m})$ are subtrails of $q_1\dots q_c$ or $q_d\dots q_w$ for every $j'\in[k]$. Hence, $Q'$ inherits property~\ref{itemC3} from $Q$. \qedhere
\end{itemize}
\end{proof}

We are now ready to complete the proof of Theorem~\ref{thm:wlgobridge}.

\begin{proof} [Proof of Theorem~\ref{thm:wlgobridge}]
Since $G$ contains no odd cut of size at most $k+1$, Lemma~\ref{lemma:existence of a starting segment} implies that $\Ins_j(A)\neq\emptyset \neq \Ins_j(B)$ for some $j\in[k]$.
By Lemma~\ref{lemma:trivial coherent trail} there is an $A_{n}{-}B_{m}{-}$coherent trail in $G-e_{k+1}$ for some $n,m \in \NN$, and so by Theorem~\ref{lemma:existence of desired circuit} there also exists an $A_{0}{-}B_{0}{-}$coherent trail $Q$ from a vertex $a_1\in A_1$ to a vertex $b_1 \in B_1$ in $G-e_{k+1}$.

By Definition~\ref{dfn:A and B}\ref{dfn:item: A_0} of $A_1$ and $B_1$, there is an admissible $a{-}a_1$ trail $P_a$ and an admissible $b{-}b_1$ trail $P_b$. Since $e_{k+1}$ is a bridge in $G-E(H)$,\footnote{We remark that this is the only place in our argument where we use that $e_{k+1}$ is a bridge in $G-E(H)$.} the trails $P_a$ and $P_b$ are vertex-disjoint. Thus, $P_a$, $P_b$, $Q$ and $e_{k+1}$ are edge-disjoint by Lemma~\ref{lemma:P and Q are disjoint} and Definition~\ref{dfn:properties of reach}\ref{item:admissible}.
Together with property~\ref{itemC1} of $Q$, it follows that $H':=baP_aa_1Qb_1\bar{P_b}b$ is the desired circuit in $G$ through $e_1,\dots,e_{k+1}$.
	
To see the moreover-part of Theorem~\ref{thm:wlgobridge}, observe that if $a\notin V(H)$, then $a\notin V(Q)$ due to \ref{itemC2} and Definition \ref{dfn:A and B}\ref{dfn:item: A_0} of $A_1$. Thus, the circuit $H'$ passes $a$ once, since $P_a$ and $P_b$ are vertex disjoint. The same holds for $b$.
\end{proof}

\section{Concluding remarks and an open question}\label{sec:Concluding remarks and an open question}

To find a circuit through any $k$ prescribed edges we employed a global property by forbidding all odd cuts of bounded size. However, if we are only interested in one specific edge set, forbidding all bounded sized odd cuts seems unnecessarily strong: For example, if our $k$ edges are contained in a $(k+1)$-edge-connected subgraph, then it is irrelevant whether the whole graph contains some further small odd cuts. Hence, the following natural question arises:

\begin{quest}\label{quest:local condition for extension to a circuit}
When can a given edge set of a graph $G$ be covered by a circuit in $G$?
\end{quest}

One line of investigation could be whether a condition similar to the one in Jaeger's theorem~\ref{thm:jaeger} could be of additional help:

\begin{dfn}\label{thm:strong jaeger's theorem}
    For any $k\in \NN$, let $g(k)$ be the smallest integer such that a set of at most $k$ edges in a $g(k)$-edge-connected graph $G$ is covered by a circuit in $G$ if and only if it contains no odd cut of $G$.
\end{dfn}

\begin{lemma}
\label{lem_inequalities}
For any $k\in \NN$,
    \begin{enumerate}
        \item $g(k)\leq m \leq k+1$, where $m$ is the smallest even integer ${\geq} k$, and
        \item for $k\geq 4$, $g(k) > \l$, where $\l$ is the greatest odd integer ${\leq} \frac{1}{2} (\sqrt{8 k-7}+1)$.
        \end{enumerate}
\end{lemma}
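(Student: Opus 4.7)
Plan. Part~(1) is essentially immediate from Theorem~\ref{thm:characterisation of Graphs containing a cycle through prescribed edges}. The inequality $m\leq k+1$ is clear from the definition of $m$. For $g(k)\leq m$, note that every cut of an $m$-edge-connected graph has size at least~$m$, and since $m$ is even, every odd cut has size at least~$m+1>k$. Hence such a graph contains no odd cut of size at most~$k$, so Theorem~\ref{thm:characterisation of Graphs containing a cycle through prescribed edges} yields a circuit through any $|S|\leq k$ prescribed edges; combined with Theorem~\ref{thm:jaeger} for the reverse implication, this verifies the equivalence in the definition of~$g(k)$.

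For part~(2), I would first rewrite the bound on~$\l$: by squaring, $\l\leq\tfrac{1}{2}(\sqrt{8k-7}+1)$ is equivalent to $\binom{\l}{2}+1\leq k$. So given any odd $\l\geq 3$, it suffices to exhibit an $\l$-edge-connected graph $G$ and an edge set $S$ of size $\binom{\l}{2}+1$ that extends to some even subgraph of~$G$ but to no connected even subgraph. My proposed construction: take two vertex-disjoint copies $K^{(1)}$ on $\{v_1,\dots,v_\l\}$ and $K^{(2)}$ on $\{u_1,\dots,u_\l\}$ of $K_\l$, joined by the perfect matching $M=\{v_iu_i:i\in[\l]\}$; and let $S:=E(K^{(1)})\cup\{u_1u_2\}$.

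Three verifications then remain. First, $G$ is $\l$-edge-connected: since $G$ is $\l$-regular one has $\lambda(G)\leq\l$, and a short case analysis on $|A\cap V(K^{(1)})|$ and $|A\cap V(K^{(2)})|$ shows $|\partial_G A|\geq\l$ for every non-trivial~$A$. Second, $S$ contains no odd cut: a cut $\partial_G A\subseteq S$ must be disjoint from~$M$, which forces $A\cap V(K^{(1)})$ and $A\cap V(K^{(2)})$ to agree under the matching identification $v_i\leftrightarrow u_i$; and the $K^{(2)}$-portion of $\partial_G A$ is then a cut of $K^{(2)}$ of size at most~$1$, which for $\l\geq 3$ forces $A\cap V(K^{(2)})\in\{\emptyset,V(K^{(2)})\}$, so $A\in\{\emptyset,V(G)\}$. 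Third, $S$ cannot be covered by a circuit: if $H\supseteq S$ is even, then each $v_i$ already has $\l-1$ (even) edges of $H$ from $E(K^{(1)})$, so parity at~$v_i$ forces $v_iu_i\notin E(H)$; hence $E(H)\cap M=\emptyset$, which disconnects the $K^{(1)}$-side of~$H$ from the $K^{(2)}$-side, the latter being non-empty because $u_1u_2\in E(H)$. The main obstacle is identifying the right construction; once it is in place, the crucial arithmetic fact that $\l-1$ is even (as $\l$ is odd) turns the parity constraint at vertices of $K^{(1)}$ into a disconnectivity argument, and the remaining verifications are elementary.
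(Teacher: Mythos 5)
Your proposal is correct and follows essentially the same route as the paper: part~(1) via Theorem~\ref{thm:characterisation of Graphs containing a cycle through prescribed edges}, and part~(2) via the identical construction of two copies of $K_\l$ joined by a perfect matching with $S$ the edge set of one copy plus one edge of the other. The only cosmetic difference is that you verify ``$S$ contains no odd cut'' by a direct cut analysis, whereas the paper notes that $S$ lies in the even subgraph $K^{(1)}+K^{(2)}$ and invokes Jaeger's Theorem~\ref{thm:jaeger}; your explicit parity argument for the non-existence of a covering circuit fills in a step the paper leaves as ``clear.''
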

\begin{proof}
The first part follows directly from Theorem~\ref{thm:characterisation of Graphs containing a cycle through prescribed edges}.

For the lower bound of $g(k)$, let $\l$ is the greatest odd integer ${\leq} \frac{1}{2} (\sqrt{8 k-7}+1)$, and consider $H_i$ to be a $K_\l$ with $V(H)= \left\{ v_{i,1},\dots,v_{i,\l} \right\}$ for $i\in [2]$. Further, we define $G:= H_1 + H_2 + \{v_{1,j}v_{2,j} \colon j\in[\l]\}$. We remark that $G$ is $\l$-connected. Now, we pick $S:= E(H_1) \cup \{e\}$ where $e$ is some edge of $E(H_2)$. We calculate $$|S| = \binom{\l}{2}+1 =  \frac{\l (\l-1)}{2} +1 \leq k$$ where the inequality holds for $\l\leq \frac{1}{2} (\sqrt{8 k-7}+1)$.
By Theorem~\ref{thm:jaeger}, $S$ contains no odd cut of $G$, because $S$ is contained in the even subgraph $H_1 + H_2$. But clearly there exists no circuit $H'$ in $G$ that covers $S$.
\end{proof}

\begin{fact}\label{fact:Strong Jaeger's theorem} 
We have
    $$g(1)=0, \; g(2) = 2, \; g(3) = 3 \, \text{ and } g(4) = 4.$$
\end{fact}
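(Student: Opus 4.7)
We verify each value separately, drawing on Theorem~\ref{thm:characterisation of Graphs containing a cycle through prescribed edges}, Example~\ref{ex:jaeger}, and Lemma~\ref{lem_inequalities}.

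\textbf{The values $g(1)$, $g(2)$, and $g(4)$.} For $g(1)=0$: in any graph the only odd cut contained in a single edge $\{e\}$ is $\{e\}$ itself, which is a cut iff $e$ is a bridge iff $\{e\}$ is not covered by a circuit; thus the equivalence in Definition~\ref{thm:strong jaeger's theorem} holds in every graph. For $g(2)\le 2$: apply Theorem~\ref{thm:characterisation of Graphs containing a cycle through prescribed edges} with $k=2$, since a $2$-edge-connected graph has no odd cut of size at most $2$ and any set of at most two edges trivially contains no odd cut. The strict lower bound $g(2)>1$ is witnessed by the graph $G$ consisting of two vertex-disjoint triangles joined by a single bridge $b$: with $S$ consisting of one edge from each triangle non-incident to $b$, the set $S$ contains no odd cut ($b\notin S$ is the only bridge, and $G-S$ is connected) but no circuit contains $S$, as any such circuit would have to traverse $b$ twice. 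Finally, Lemma~\ref{lem_inequalities}(1) with $k=4$ yields $g(4)\le 4$, and Lemma~\ref{lem_inequalities}(2) with $k=4$ gives $g(4)>3$, since $\tfrac12(\sqrt{25}+1)=3$ is already odd.

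\textbf{The value $g(3)=3$.} The lower bound $g(3)>2$ is Example~\ref{ex:jaeger} with $k=3$: the $2$-edge-connected ladder with four rungs admits three rungs as a set $S$ that contains no odd cut but lies in no connected even subgraph. For the upper bound $g(3)\le 3$, let $G$ be $3$-edge-connected and $|S|\le 3$ contain no odd cut. The case $|S|\le 2$ is immediate from Theorem~\ref{thm:characterisation of Graphs containing a cycle through prescribed edges}, since $G$ has no odd cut of size at most $2$. For $|S|=3$ the hypothesis simply reduces to ``$S$ is not a $3$-cut'' (since all cuts of $G$ have size at least $3$, and the only odd cuts of size at most $3$ are the $3$-cuts). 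We then argue by induction on $|V(G)|$: if $G$ has no non-trivial $3$-cut, then its only $3$-cuts are stars at degree-$3$ vertices, and a local suppression of such a vertex reduces the problem to Theorem~\ref{thm:characterisation of Graphs containing a cycle through prescribed edges} applied to a smaller $3$-edge-connected graph; otherwise, fix a non-trivial $3$-cut $\partial V_1$ (both sides of size at least~$2$), contract each of its sides to obtain smaller $3$-edge-connected graphs $G'$ and $G''$, apply induction (or Theorem~\ref{thm:characterisation of Graphs containing a cycle through prescribed edges} with $k=2$ when the reduced edge set has size at most $2$) to obtain circuits through the appropriate portions of $S$, and lift these back to $G$ by replacing each contracted vertex with a suitable trail in the uncontracted side (which is possible because the cut has only three edges, so the contracted vertex is visited at most once).

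\textbf{Main obstacle.} The crux of the argument is the upper bound $g(3)\le 3$. Although Theorem~\ref{thm:characterisation of Graphs containing a cycle through prescribed edges} directly handles the case of $3$-edge-connected graphs without $3$-cuts, a general $3$-edge-connected graph may contain many $3$-cuts that are disjoint from or partially overlap $S$. The hardest configuration in the inductive step is when $S$ has exactly one edge on each side of a non-trivial $3$-cut $\partial V_1$ together with one edge crossing it: neither contracted subproblem $G'$ or $G''$ then sees all of $S$, and one must instead obtain one circuit through the two $S$-edges visible in $G'$ and another through the two visible in $G''$ (each via Theorem~\ref{thm:characterisation of Graphs containing a cycle through prescribed edges} with $k=2$), and splice these two circuits along their shared crossing edge of $\partial V_1$ to assemble a single circuit through $S$ in $G$.
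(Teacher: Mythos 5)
Your treatment of $g(1)$, $g(2)$, $g(4)$ and of the lower bound $g(3)>2$ is correct and essentially identical to the paper's (same bridge-between-two-cycles example for $g(2)>1$, same appeal to Example~\ref{ex:jaeger} and to Lemma~\ref{lem_inequalities}). The one substantive step, the upper bound $g(3)\le 3$, is where you diverge from the paper, and there your argument has genuine gaps.

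First, your base case fails. If $G$ is $3$-edge-connected with no non-trivial $3$-cut, its $3$-cuts are the stars of degree-$3$ vertices, and you propose a ``local suppression'' of such a vertex followed by an application of Theorem~\ref{thm:characterisation of Graphs containing a cycle through prescribed edges}. But suppression is not a defined operation at a degree-$3$ vertex (splitting off one pair of edges leaves a degree-$1$ vertex), and in a cubic $3$-edge-connected graph such as $K_4$ or the Petersen graph \emph{every} vertex has degree $3$, so no sequence of local modifications produces a graph without odd cuts of size $3$; Theorem~\ref{thm:characterisation of Graphs containing a cycle through prescribed edges} with $k=3$ is simply never applicable to this class, and your base case has no proof. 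Second, the splice in your ``hardest configuration'' does not go through as stated. The circuit $C'$ in $G'$ through $\{e_1,e_3\}$ must use exactly two of the three crossing edges (the contracted vertex has degree $3$), say $e_3$ and $f$, and likewise $C''$ uses $e_3$ and some $g$. If $f\neq g$, the union of the two lifted trails gives odd degree at the endpoints of $f$ and $g$, so it is not the edge set of a circuit, and ``splicing along the shared crossing edge $e_3$'' produces nothing; you would need an additional argument to force $f=g$ or to repair the parity, and none is given.

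For contrast, the paper avoids the induction entirely: it applies Jaeger's Theorem~\ref{thm:jaeger} to get an even subgraph containing $S$, takes it subgraph-minimal so that it has at most three components (each must contain an edge of $S$), and then uses Menger's theorem to find three edge-disjoint paths between the components, two of which can be combined with the even subgraph to merge everything into one circuit. If you want to pursue your decomposition along non-trivial $3$-cuts, the two points above are exactly what you would need to fix.
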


\begin{proof}
To see $g(1)=0$, observe that any edge not being a bridge of its component must lie on a cycle.

For $g(2)=2$, note that $g(2) \leq 2$ by Lemma~\ref{lem_inequalities}, and $g(2) > 1$ by considering two disjoint cycles connected by an edge, and letting $S$ consist of one edge from each cycle. 
    
Next, Example~\ref{ex:jaeger} shows $g(3)>2$. For $g(3)\leq 3$, let $G$ be a $3$-edge-connected graph and $S$ be a $3$-set of edges which contains no odd cut of size at most three. By Theorem~\ref{thm:jaeger}, there exists an even subgraph $H$ of $G$. We choose $H$ subgraph-minimal, and so $H$ has at most three components.
    
First, we assume that $H$ has three components $C_1, C_2, C_3$, and reduce it to the case where $H$ has two components by considering the three edge-disjoint $V(C_1){-}V(C_2+C_3)$ paths in $G$ which exist by Menger's theorem. 
    
Now, we assume that $H$ has two components $C_1, C_2$ where without loss of generality $|E(C_1)\cap S|=1$. Again there are three edge-disjoint $V(C_1){-}V(C_2)$ paths in $G$. At least two of them meet the same segment of $C_2$ such that we can construct a cycle in $G$ which goes through all three edges.
    
Finally, $g(4)=4$ follows from Lemma~\ref{lem_inequalities}.
\end{proof}

Thus, by adding Jaeger's condition, for odd $|S|$ it appears we need less edge connectivity than before. It might be an interesting problem to find the precise values for the function $f$, or at least to improve any of the bounds given in Lemma~\ref{lem_inequalities}. In particular, we were not able to find an example witnessing $g(5)>4$.

\bibliography{references}

\end{document}